\newtheorem{theorem}{Theorem}[section]
\newtheorem{lemma}[theorem]{Lemma}
\newtheorem{proposition}{Proposition}[section]
\theoremstyle{definition}
\newtheorem{definition}[theorem]{Definition}
\theoremstyle{remark}
\newtheorem{remark}[theorem]{Remark}
\numberwithin{equation}{section}
\begin{document}

\title[Stationary solutions for the ellipsoidal BGK model in a slab ]{Stationary solutions for the ellipsoidal BGK model in a slab} %in $\mathbb{R}^n$}

\author{Jeaheang Bang}
%    Address of record for the research reported here
%\address{Division of Applied Mathematics, Brown University, Providence, RI, USA.}
%    Current address
\address{Department of Mathematics,
Rutgers University,
Hill Center - Busch Campus
110 Frelinghuysen Road
Piscataway, NJ 08854-8019, USA }
\email{j.bang@rutgers.edu }
%\curraddr{Department of Mathematics and Statistics,Case Western Reserve University, Cleveland, Ohio 43403}
%\email{guo@brown.edu}
%    Information for first author
\author{Seok-Bae Yun}
%    Address of record for the research reported here
\address{Department of Mathematics, Sungkyunkwan University, Suwon 440-746, Republic of Korea}
%    Current address
%\curraddr{Department of Mathematics and Statistics,Case Western Reserve University, Cleveland, Ohio 43403}
\email{sbyun01@skku.edu}
%\homepage{http:\backslash\backslash www.worldpress.com$\backslash$ seokbaeyun}
%    \thanks will become a 1st page footnote.
%\thanks{The first author was supported in part by NSF Grant \#000000.}
%    Information for first author

%    General info
\subjclass[2010]{35Q20,82C40,35A01,35A02,35F30}

%\date{January 1, 2001 and, in revised form, June 22, 2001.}
%\dedicatory{This paper is dedicated to our advisors.}

\keywords{Ellipsoidal BGK model, Boltzmann equation, Kinetic theory of gases, Boundary value problem in a slab;
Inflow boundary conditions}

\begin{abstract}
We address the boundary value problem for the ellipsoidal BGK model of the Boltzmann equation posed in a bounded interval.
The existence of a unique mild solution is established under the assumption that the inflow boundary data does not concentrate too much around the zero velocity, and
the gas is sufficiently rarefied.
\end{abstract}
\maketitle
%\tableofcontents
\section{Introduction}
In this paper, we are interested in the boundary value problem of stationary ellipsoidal BGK model:
\begin{equation}\label{ESBGK}
  v_{1} \frac{\partial f}{\partial x} =\frac{\rho }{\tau}\big( \mathcal{M}_{\nu}(f)-f \big),
\end{equation}
on a finite interval $[0,1]$ associated with the boundary condition:
\begin{eqnarray}\label{boundary value}
f(0,v)=f_{L}(v)\quad \text{for}\ v_{1}>0,\qquad f(1,v)=f_{R}(v)\quad \text{for}\ v_{1}<0.
\end{eqnarray}
The velocity distribution function $f(x,v)$ is the number density at $x\in [0,1]$ with velocity $v=(v_1,v_2,v_3)\in\mathbb{R}^3$.
We have normalized the spatial domain for simplicity.
$\tau$ is defined by $\tau=\kappa (1-\nu)$, where $\kappa$
denotes the Knudsen number defined as the ratio between the mean free path and the characteristic length,
and $\nu\in (-1/2,1)$ is the relaxation parameter.
The ellipsoidal Gaussian $\mathcal{M}_{\nu}(f)$ reads
\begin{eqnarray*}
\mathcal{M}_{\nu}(f)=\frac{\rho}{\sqrt{\det(2\pi \mathcal{T}_{\nu})}}\exp\left(-\frac{1}{2}(v-U)^{\top}\mathcal{T}^{-1}_{\nu}(v-U)\right),
\end{eqnarray*}
where the local density $\rho$, momentum $U$, temperature $T$ and the stress tensor $\Theta$ are given by
\begin{eqnarray}\label{macroscopic field}
\begin{split}
\rho(x)&=\int_{\mathbb{R}^3}f(x,v)dv,\cr
\rho(x)U(x)&=\int_{\mathbb{R}^3}f(x,v)vdv,\cr
3\rho(x) T(x)&=\int_{\mathbb{R}^3}f(x,v)|v-U|^2dv,\cr
\rho(x)\Theta(x)&=\int_{\mathbb{R}^3}f(x,v)(v-U)\otimes(v-U)dv,
%&=&\int_{\mathbb{R}^3}f(x,v,t)(v\otimes v)dv-\rho U\otimes U,
\end{split}
\end{eqnarray}
and the temperature tensor $\mathcal{T}_{\nu}$ is defined as a linear combination of $T$ and $\Theta$:
\begin{eqnarray*}
\mathcal{T}_{\nu}&=&\left(
\begin{array}{ccc}
(1-\nu) T+\nu\Theta_{11}&\nu\Theta_{12}&\nu\Theta_{13}\cr
\nu\Theta_{21}&(1-\nu)T+\nu\Theta_{22}&\nu\Theta_{23}\cr
\nu\Theta_{31}&\nu\Theta_{32}&(1-\nu) T+\nu\Theta_{33}
\end{array}
\right)\cr
&=&(1-\nu) T Id+\nu\Theta.
\end{eqnarray*}
A direct calculation gives the following cancellation property
\begin{eqnarray*}
\begin{split}
\int_{\mathbb{R}^3}\big(\mathcal{M}_{\nu}(f)-f\big)
\left(\begin{array}{c}
1\cr v\cr|v|^2
\end{array}\right)
dv=0,
\end{split}
\end{eqnarray*}
leading to the conservation of following quantities along $x\in[0,1]$:
\[
\int_{\mathbb{R}^3}fv_1dv, \quad\int_{\mathbb{R}^3}fv_1vdv,\quad \int_{\mathbb{R}^3}fv_1|v|^2dv.
\]
Andries et al \cite{ALPP} derived
\begin{eqnarray*}
%\frac{d}{dt}\int_{\mathbb{R}^3_x\times\mathbb{R}^3_v}f(t)\log f(t)dxdv\leq 0.
\int_{\mathbb{R}^3}\big(\mathcal{M}_{\nu}(f)-f\big)\ln fdv\leq 0,
\end{eqnarray*}
which gives the $H$-theorem for the time dependent problem (See also \cite{B3,Yun4}).\newline

The ellipsoidal BGK model is a generalized version of the original BGK model \cite{BGK,Wal} which has been
widely used as a model equation of the Boltzmann equation. It was introduced by Holway \cite{Holway}
to overcome the well-known short-coming of the original BGK model: the incorrect Prandtl number in the Navier-Stokes limit. He introduced the relaxation parameter $\nu$ and generalized the local Maxwellian in the original BGK model into the ellipsoidal Gaussian by replacing the macroscopic temperature with a temperature tensor $\mathcal{T}_{\nu}$ parametrized by $\nu\in(-1/2,1)$ (For the discussion of the range of $\nu$, see \cite{Yun3}).
Through a Chapmann-Enskog expansion, it can be shown that the Prandtl number of the ES-BGK model is given by
$1/(1-\nu)$, and the desired physical Prandtl is obtained by choosing the proper relaxation parameter,
which is $\nu=1-1/Pr\approx-1/2$. Note that, in the case $\nu=0$,
the ES-BGK model reduces back to the original BGK model. Therefore, any results for the ES-BGK model automatically holds for the original BGK model either.
The ES-BGK model, however, has been somewhat neglected in the literature, due mainly to the fact that the $H$-theorem was not verified. This was done recently by Andries et al \cite{ALPP} and revived the interest on this model \cite{ABLP,B1,F-J,G-T,M-W-R,M-S,Stru,Yun2,Yun3,Yun4,Yun5,Z-Stru}. %The mathematical study on this model, however, is still in its initial state.
 %Global existence of smooth solutions near equilibrium is established in \cite{Yun3}, and \cite{Yun2} considers the existence of large smooth solution in the case when the collision frequency is fixed.
 %For physical or mathematical survey on the BGK model, see\cite{C,Sone2,Stru-book}

In this paper, we consider the ES-BGK model posed in a bounded interval with fixed inflow boundary conditions at both ends. Similar problem was considered by Ukai in \cite{Ukai-BGK} for the original BGK model (the case of $\nu=0$) using a version of the Schauder fixed point theorem to the macroscopic variables. No smallness assumption was imposed, but the  uniqueness was not guaranteed.
We develop here a Banach fixed point type approach for the ES-BGK model which works for the whole range of relaxation parameter ($-1/2<\nu<1$), under the assumptions that the gas is sufficiently rarefied and the boundary inflow data does not concentrate too much near the zero velocity. The first assumption is a kind of smallness condition, which is typical for Banach fixed point type arguments. It is, however, not clear whether the second condition is of intrinsic nature, or mere a technicality that can be overcome by developing finer analysis.
(See Remark (3) in Section 2.)

Brief reference check for related works is in order.
In \cite{ACI}, 1$d$ stationary problem for the Boltzmann equation with Maxwellian molecules was studied in the frame work of measure valued solutions. In a series of paper \cite{AN1,AN2,AN3}, Arkeryd and Nouri studied the existence of weak solutions in $L^1$. Extensions of these argument into the case of two component-gases were made in \cite{B1,B2}. Gomeshi obtained the existence and uniqueness for the Boltzmann equation when the gas is sufficiently rarefied \cite{Gome}, which largely motivated our work. For nice survey of mathematical and physical aspects of the Boltzmann equation and BGK models, see \cite{C,CC,CIP,GL,Sone2,Stru-book,U-T,V}.
%%%%%%%%%%%%%%%%%%%%%%%%%%%%%%%%%%%%%%%%%%%%%%%%%%%%%%%%%%%%%%%%%%%%%%%%%%%%%%%%%%%%%%%%%%%%%%%%%%%%%%%%%%%%
%
%
%                                  Notation
%
%
%%%%%%%%%%%%%%%%%%%%%%%%%%%%%%%%%%%%%%%%%%%%%%%%%%%%%%%%%%%%%%%%%%%%%%%%%%%%%%%%%%%%%%%%%%%%%%%%%%%%%%%%%%%%%
\subsection{ Notations} To prevent confusion, we fix some notational conventions which we will keep throughout this paper.
\begin{itemize}
\item Every constant, usually denoted by $C$ or $C_{a,b,\cdots}$ will be generically defined.
The  values of $C$ may differ line by line, and even when the same $C$ appears more than once in a line, they are not necessarily of the same value. But all the contants are explicitly computable in principle.
%\item When necessary, we write $C_{a,b,\cdots}$ to clarify the dependence on $a$, $b$
\item We use $C_{\ell,u}$ to denote a positive constant that can be explicitly computed using only
the quantities
in (\ref{quantities}), $\gamma_{\ell}$ in the Theorem \ref{Main} and the relaxation parameter $\nu$.
$C_{\ell,u}$ is generic too in the above mentioned sense.
%\item Since $\kappa$ and $\nu$ always come together in the form of $\kappa(1-\nu)$, we give it a symbol:
%\[\tau =\kappa (1-\nu),\]to simplify notations.
\item We fix $a_u$, $a_{\ell}$, $a_s$, $c_{u}$, $c_{\ell}$, $c_s$  and $\gamma_{\ell}$ appearing in (\ref{quantities}) and Theorem 2.2 for this use only.
\item When there's no risk of confusion, we write
\[
\mathcal{M}_i(v_i)=e^{-C_{\ell,u}v^2_i}\quad(i=1,2,3)
\]
without explicitly showing the dependence on $C_{\ell,u}$ for simplicity of notations.
\item When there's no risk of confusion, we use $v_1>0$ to denote either $\{v_1>0\}\subset\mathbb{R}$ or $\{v_1>0\}\times\mathbb{R}^2$ according to the context.
\item We define $\sup_x\|\cdot\|_{L^1_2}$ and $\|\cdot\|_{L^{\infty}_2}$ by
\begin{align*}
\sup_x\|f\|_{L^1_2}&=\sup_x\Big\{\int_{\mathbb{R}^3}|f(x,v)|(1+|v|^2)dv\Big\},\cr
\|f\|_{L^{\infty}_2}&=\sup_{x,v}|f(x,v)|(1+|v|^2).
\end{align*}
\end{itemize}

This paper is organized as follows. The main result is stated in the following section 2.
Some relevant issues are also discussed.
In section 3, we reformulate the problem in the
fixed point set up. Some useful technical lemmas are recorded. Then section 4 and 5 is devoted respectively to showing that the solution map is invariant and contractive in the solution space.
%%%%%%%%%%%%%%%%%%%%%%%%%%%%%%%%%%%%%%%%%%%%%%%%%%%%%%%%%%%%%%%%%%%%%%%%%%%%%%%%%%%%%%%%%%%%%%%%%%%%%%%%%%%%%%%%%%%%%%%
%%%%%%%%%%%%%%%%%%%%%%%%%%%%%%%%%%%%%%%%%%%%%%%%%%%%%%%%%%%%%%%%%%%%%%%%%%%%%%%%%%%%%%%%%%%%%%%%%%%%%%%%%%%%%%%%%%%%%%%
%%
%%                                                   Main Result
%%
%%%%%%%%%%%%%%%%%%%%%%%%%%%%%%%%%%%%%%%%%%%%%%%%%%%%%%%%%%%%%%%%%%%%%%%%%%%%%%%%%%%%%%%%%%%%%%%%%%%%%%%%%%%%%%%%%%%%%%%
%%%%%%%%%%%%%%%%%%%%%%%%%%%%%%%%%%%%%%%%%%%%%%%%%%%%%%%%%%%%%%%%%%%%%%%%%%%%%%%%%%%%%%%%%%%%%%%%%%%%%%%%%%%%%%%%%%%%%%%
\section{Main result}
Before we state our main result, we need to define the following quantities (Recall that $\tau=\kappa(1-\nu)$):
\begin{align}\label{quantities}
\begin{split}
a_{u}&=2\int_{\mathbb{R}^3}f_{LR}dv,\, a_{\ell}=\int_{\mathbb{R}^3}e^{- \frac{ a_u}{\tau|v_1|}}f_{LR}dv,\,
a_s=\int_{\mathbb{R}^3}\frac{1}{|v_1|}f_{LR}dv,\cr
%b_{u}&=2\int_{\mathbb{R}^3}f_{LR}(v)|v_1|dv, \quad
c_{u}&=2\int_{\mathbb{R}^3}f_{LR}|v|^2dv, \, c_{\ell}=\int_{\mathbb{R}^3}
e^{- \frac{ a_u}{\tau|v_1|}}f_{LR}|v|^2dv,\,
c_s=\int_{\mathbb{R}^3}\frac{1}{|v_1|}f_{LR}|v|^2dv,
\end{split}
\end{align}
where we used abbreviated notation:
 \[f_{LR}(v)=f_L(v)1_{v_1>0}+f_R(v)1_{v_1<0}.\]
 We define the mild solution of (\ref{ESBGK}) as follows:
\begin{definition} $f\in L^1_2([0,1]_x\times \mathbb{R}^3_v)$ is said to be a mild solution for (\ref{ESBGK})
if it satisfies
\begin{align}\label{mild f+}
f(x,v)&=e^{-\frac{1}{\tau|v_1|}\int^x_0\rho_f(y)dy}f_{L}(v)\cr
&+\frac{1}{\tau|v_1|}\int_{0}^{x} e^{-\frac{1}{\tau|v_1|}\int^x_y\rho_f(z)dz}\rho_f(y)\mathcal{M}_{\nu}(f)dy
\quad\text{if $v_{1}>0$}
\end{align}
 and
\begin{align}\label{mild f-}
f(x,v)&=e^{-\frac{1}{\tau|v_1|}\int^1_x\rho_f(y)dy}f_{R}(v)\cr
&+\frac{1}{\tau|v_1|}\int_{x}^1 e^{-\frac{1}{\tau|v_1|}\int^y_x\rho_f(z)dz}
\rho_f(y)\mathcal{M}_{\nu}(f)dy
\quad\text{if $v_{1}<0$}.
\end{align}
\end{definition}
%%%%%%%%%%%%%%%%%%%%%%%%%%%%%%%%%%%%%%%%%%%%%%%%%%%%%%%%%%%%%%%%%%%%%%%%%%%%%%%%%%%%%%%%%%%%%%%%%%%%%%%%%%%%
%
%
%                                  Main result
%
%
%%%%%%%%%%%%%%%%%%%%%%%%%%%%%%%%%%%%%%%%%%%%%%%%%%%%%%%%%%%%%%%%%%%%%%%%%%%%%%%%%%%%%%%%%%%%%%%%%%%%%%%%%%%%%
The main result of this paper is as follows:
\begin{theorem}\label{Main}
Suppose that the mass of the inflow boundary data $f_{LR}\geq 0$ $($not identically zero$)$ is finite and does not concentrate too much around the zero velocity in the following sense:
\begin{eqnarray}\label{satisfies}
f_{LR},\quad \frac{1}{|v_1|}f_{LR}\in L^1_2,
\end{eqnarray}
so that the quantities defined in (\ref{quantities}) are well-defined.  Suppose further that
\begin{equation}\label{0}
\int_{\mathbb{R}^2}f_Lv_idv_2dv_3=\int_{\mathbb{R}^2}f_Rv_idv_2dv_3=0 \quad(i=2,3)% \int_{\mathbb{R}^2}f_Rdv_2dv_3=0.
\end{equation}
and there exists a constant $\gamma_{\ell}>0$ such that
\begin{align}\label{gamma}
\left(\int_{v_1>0}e^{-\frac{a_{u}}{\tau|v_1|}}f_L(v)|v_1|dv\right)
\left(\int_{v_1<0}e^{-\frac{a_u}{\tau|v_1|}}f_R(v)|v_1|dv\right)>\gamma_{\ell}.
\end{align}
Then there exists a constant $K>0$ depending only on the quantities defined in (\ref{quantities}) and $\gamma_{\ell}$ such that, if  $\tau>K$,
then there exists a unique mild solution $f\geq0$ for (\ref{ESBGK}) satisfying
\begin{align*}
a_{\ell}\leq \int_{\mathbb{R}^3}f(x,v)dv\leq a_u,\quad c_{\ell}\leq \int_{\mathbb{R}^3}f(x,v)|v|^2dv\leq c_u,
\end{align*}
and \begin{align*}
&\left(\int_{\mathbb{R}^3}fdv\right)\left(\int_{\mathbb{R}^3}f|v|^2dv\right)-\left(\int_{\mathbb{R}^3}fv_1dv\right)^2\geq \gamma_{\ell}.\cr
\end{align*}
\end{theorem}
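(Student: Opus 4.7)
\emph{Proof plan.} The strategy is a Banach fixed point argument based on the mild formulation (\ref{mild f+})--(\ref{mild f-}). I would first rewrite the boundary value problem as a nonlinear integral equation $f = \Phi(f)$: given $f$, compute its macroscopic fields $\rho_f, U_f, T_f, \Theta_f$, build $\mathcal{M}_\nu(f)$, and let $\Phi(f)$ be the right-hand side of (\ref{mild f+})--(\ref{mild f-}). The natural solution space $\mathcal{S}$ is the set of $f \in L^1_2$ with $f \geq 0$ satisfying all the bounds in the conclusion of Theorem \ref{Main}, equipped with the distance induced by $\sup_x \|\cdot\|_{L^1_2}$. The determinant-type bound, together with $\nu \in (-1/2, 1)$ and the vanishing transverse momentum assumption (\ref{0}), ensures that $\mathcal{T}_{\nu,f}$ is uniformly positive-definite on $\mathcal{S}$, so that $\mathcal{M}_\nu(f)$ is well-defined and admits a pointwise Gaussian majorant of the form $e^{-C_{\ell,u}|v|^2}$ introduced in the notations.

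For the invariance $\Phi(\mathcal{S}) \subseteq \mathcal{S}$ (Section 4), I integrate (\ref{mild f+})--(\ref{mild f-}) against $1$ and $|v|^2$. The upper bounds $a_u$ and $c_u$ follow from the cancellation property of the ES-BGK operator, which forces $\mathcal{M}_\nu(f)$ to share density and energy with $f$, so that the gain term telescopes against the boundary exponential. The lower bounds $a_\ell, c_\ell$ are obtained by discarding the non-negative gain term and using the upper bound $\int_0^1 \rho_f \leq a_u$ to minorize the boundary exponential by $e^{-a_u/(\tau|v_1|)}$, which is exactly what the definitions in (\ref{quantities}) are tailored to. For the determinant condition, I use the elementary inequality
\[
\rho_g \int g|v|^2\,dv - \left(\int g v_1\,dv\right)^2 \geq \tfrac{1}{2} \iint g(v) g(w) (v_1 - w_1)^2 \, dv\, dw,
\]
valid for any $g \geq 0$; applied to $g = \Phi(f)$, restricted to $v_1 > 0, w_1 < 0$ (where $(v_1 - w_1)^2 \geq 4|v_1||w_1|$), and using again the boundary minorant for $\Phi(f)$, this reduces to exactly the product appearing in the hypothesis (\ref{gamma}).

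The contraction estimate (Section 5) compares $\Phi(f)$ and $\Phi(g)$ in $\sup_x \|\cdot\|_{L^1_2}$ by splitting into the boundary term (only the exponential weight changes) and the gain term (both the weight and $\mathcal{M}_\nu$ change). Both contributions pick up a $\tau^{-1}$ prefactor, either explicitly or by linearizing the exponential. The map $f \mapsto \mathcal{M}_\nu(f)$ is locally Lipschitz in the macroscopic fields on $\mathcal{S}$ by virtue of the uniform positive-definiteness of $\mathcal{T}_{\nu,f}$, and the Gaussian majorant absorbs the polynomial weight of the $L^1_2$ norm. Choosing $K$ large enough brings the Lipschitz constant strictly below $1$, and the Banach fixed point theorem yields the unique solution, which is automatically non-negative since $\Phi$ preserves non-negativity.

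The main obstacle is propagating the determinant-type lower bound through $\Phi$, since unlike $\rho$ and energy it does not inherit a sign from a monotone integration. This is where the two structural hypotheses (\ref{0}) and (\ref{gamma}) are used essentially: (\ref{0}) enables the one-dimensional Cauchy--Schwarz reduction used above, and (\ref{gamma}) supplies the quantitative lower bound that survives restriction to the cross quadrant $\{v_1 > 0, w_1 < 0\}$ and the loss of the boundary exponential weight. A secondary technical point is obtaining a uniform Gaussian majorant on $\mathcal{M}_\nu(f)$ independent of $f \in \mathcal{S}$; this reduces to uniform eigenvalue control on $\mathcal{T}_{\nu,f}$ and is also where the range $\nu \in (-1/2, 1)$ enters.
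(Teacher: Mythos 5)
Your overall architecture --- the same fixed-point space (non-negativity, the two-sided bounds by $a_\ell,a_u,c_\ell,c_u$, the determinant condition), the same metric $\sup_x\|\cdot\|_{L^1_2}$, invariance plus contraction --- coincides with the paper's, and several steps (the lower bounds via the minorant $e^{-a_u/(\tau|v_1|)}f_{LR}$, the Lipschitz estimate for $\mathcal{M}_\nu$ through the macroscopic fields, the reduction of the determinant condition to the cross-quadrant product in (\ref{gamma})) are essentially the paper's. Your symmetrized identity
\[
\rho_g\int g\,v_1^2\,dv-\Bigl(\int g\,v_1\,dv\Bigr)^2=\tfrac12\iint g(v)g(w)(v_1-w_1)^2\,dv\,dw\geq 4\Bigl(\int_{v_1>0}g|v_1|\,dv\Bigr)\Bigl(\int_{w_1<0}g|w_1|\,dw\Bigr)
\]
is a clean equivalent of the paper's $a^2-b^2=(a-b)(a+b)$ manipulation in Lemma \ref{gl}.

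The genuine gap is in the upper bounds and, more broadly, in the treatment of the Duhamel gain term. You assert that $a_u$ and $c_u$ follow from the cancellation property, ``so that the gain term telescopes against the boundary exponential.'' This does not work: the cancellation property yields conservation of the fluxes $\int fv_1\,dv$, $\int fv_1|v|^2\,dv$ along $x$, not of $\int f\,dv$ or $\int f|v|^2\,dv$, and in the mild formulation the gain term $\frac{1}{\tau|v_1|}\int_0^x e^{-\cdots}\rho_f(y)\mathcal{M}_{\nu}(f)(y,v)\,dy$ integrated over the half-space $v_1>0$ admits no telescoping identity with the boundary term ($\mathcal{M}_\nu$ is evaluated at $y\neq x$ and only half of velocity space is integrated). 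The paper's actual mechanism is quantitative smallness: the gain term contributes at most $C_{\ell,u}(\ln\tau+1)/\tau$, and the factor $2$ built into $a_u=2\int f_{LR}\,dv$, $c_u=2\int f_{LR}|v|^2\,dv$ leaves room to absorb it once $\tau>K$. Establishing that smallness is the technical heart of the paper (estimate (\ref{I})): the prefactor $\frac{1}{\tau|v_1|}$ is not integrable against a Gaussian near $v_1=0$, and integrating the damping exponential in $y$ first only yields the $O(1)$ bound $\frac{1}{a_\ell}\bigl(1-e^{-a_\ell x/(\tau|v_1|)}\bigr)$; one must split into $|v_1|<1/\tau$, $1/\tau<|v_1|<\tau$, $|v_1|>\tau$ and Taylor-expand to extract the $(\ln\tau+1)/\tau$ decay. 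Your proposal relies on this same smallness everywhere without proving it: the ``$\tau^{-1}$ prefactor'' you claim for the gain part of the contraction estimate, the smallness of the transverse momenta $\int\Phi(f)v_i\,dv$ ($i=2,3$) needed to upgrade your $v_1$-only determinant bound to the full-vector condition $(\mathcal{C})$ defining the solution space (which is what gives the lower bound on $T$ and hence the uniform Gaussian majorant on $\mathcal{M}_\nu$), and the survival of the $4\gamma_\ell$ margin all rest on it. Without estimate (\ref{I}) --- or a correct substitute --- neither the invariance nor the contraction closes.
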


\begin{remark}
(1) The last assertion of the above theorem guarantees the strict positivity of the temperature tensor
for $k \in\mathbb{R}^3$. This is important since, otherwise, the ellipsoidal Gaussian is not well-defined.
(See Lemma \ref{UT lemma}).\newline\newline
(2) The well-posedness of bulk velocity follows from the above estimates since
\[
\Big| \int_{\mathbb{R}^3}f(x,v)vdv\Big|\leq \frac{a_u+c_u}{2}.
\]

\noindent(3) The condition (\ref{satisfies}) is used to prove the contractiveness of the solution operator.
(See the proof of Proposition \ref{fixed point 2}.)
In the literature on the stationary problem of the Boltzmann equation,
truncation of the collision kernel near origin is often employed  to
overcome the technical difficulties arising in the small velocity region.
(See, for example, \cite{ACI,AN1,CIS}.) Our non-concentration
condition (\ref{satisfies}) can be understood in some sense as a weak truncation of the boundary data near zero. This is good in that we are not imposing any restriction on the equation, but bad
at the same time since it excludes Maxwellian boundary data,
which is the most representative distribution function in the kinetic theory.\newline

%Note however, that
%this exclusion is not as severe as in the Cauchy problem, since  the mass is not conserved along the interval in the stationary case.\newline\newline
\noindent(4) Let us provide an explicit example of the boundary data which satisfies all the conditions above. Define $f_L$ and $f_R$ by
\begin{align*}
f_L(v)=C_L1_{r_1\leq v_1\leq r_2}e^{-\frac{|v_2|^2}{2}}e^{-\frac{|v_3|^2}{2}},\quad f_R(v)=C_R1_{-r_2\leq v_1\leq -r_1}e^{-\frac{|v_2|^2}{2}}e^{-\frac{|v_3|^2}{2}},
\end{align*}
with $C_L,C_R>0$ and $r_2>r_1>0$. Here $1_A$ is the characteristic function on $A$. Clearly, $f_{LR}$ satisfies (\ref{0}). Moreover, since $f_{LR}$ decays sufficiently fast and
vanishes near $v_1=0$, all quantities in (\ref{quantities}) are well-defined. To check (\ref{gamma}), we compute
\[
a_u=\pi(C_L+C_R)(r_2-r_1)
\]
so that
\begin{align*}
\int_{v_1>0}e^{- \frac{ a_u}{\tau|v_1|}}f_{L}(v)v_1dv
%&\geq\int_{v_1>0}e^{- \frac{2C(R-r)}{\tau r}}f_{LR}(v)v_1dv\cr
&\geq e^{- \frac{\pi(C_L+C_R)(r_2-r_1)}{\tau r_1}}\int_{\mathbb{R}^3}f_{L}(v)v_1dv\cr
&=C_Le^{- \frac{\pi(C_L+C_R)(r_2-r_1)}{\tau r_1}}\Big(\int^{r_2}_{r_1}v_1dv_1\Big)\Big(\int_{\mathbb{R}^2}e^{-\frac{|v_2|^2}{2}}e^{-\frac{|v_3|^2}{2}}dv_2dv_3\Big)\cr
&= \frac{\pi}{4}C_Le^{- \frac{\pi(C_L+C_R)(r_2-r_1)}{\tau r_1}}(r^2_2-r_1^2).
\end{align*}
Similarly,
\begin{align*}
\int_{v_1>0}e^{- \frac{ a_u}{\tau|v_1|}}f_{R}(v)v_1dv\geq \frac{\pi}{4}C_Re^{- \frac{\pi(C_L+C_R)(r_2-r_1)}{\tau r_1}}(r^2_2-r^2_1)
\end{align*}
Therefore,
\begin{align*}
&\left(\int_{v_1>0}e^{-\frac{a_{u}}{\tau|v_1|}}f_L(v)|v_1|dv\right)
\left(\int_{v_1<0}e^{-\frac{a_u}{\tau|v_1|}}f_R(v)|v_1|dv\right)\cr
&\qquad\geq\frac{\pi^2}{4}C_LC_R
e^{- \frac{4(C_R+C_L)(R-r)}{\tau r_1}}(r^2_2-r^2_1)^2,
%&\qquad>\frac{\pi^2}{4}C_LC_R\exp\left(- \frac{4(C_R+C_L)(R-r)}{\tau r}\right)(R^2-r^2)^2,
\end{align*}
which is strictly positive.\newline\newline
\noindent(5)  At first sight, the definition of $a_{\ell}$, $c_{\ell}$ and $\gamma_{\ell}$ seems a little dangerous in that they contain $\tau$ inside the integral, which may
leads to a kind of circular reasoning in the choice of $\tau$.
But, when $\tau$ is very large, (whose size is determined only by $f_{LR}$), we can treat $a_{\ell}$, $c_{\ell}$ and $\gamma_{\ell}$ as if they are independent of $\tau$. We only consider $c_{\ell}$:
Take $r>0$ small enough such that
\begin{align*}
\int_{|v_1|\geq r}f_{LR}(v)|v|^2dv\geq\frac{1}{2}\int_{\mathbb{R}^3}f_{LR}(v)|v|^2dv.
\end{align*}
Then, we observe
\begin{align*}
\int_{\mathbb{R}^3}e^{-\frac{a_{u}}{\tau|v_1|}}f_{LR}(v)|v|^2dv&\geq e^{-\frac{a_{u}}{\tau r}}\int_{|v_1|\geq r}f_{LR}(v)|v|^2dv\cr
&\geq \frac{1}{2}e^{-\frac{a_{u}}{\tau r}}\int_{\mathbb{R}^3}f_{LR}(v)|v|^2dv.
\end{align*}
Since we are going to take $\tau$ sufficiently large, and $a_u$ and $r$ does not depend on $\tau$, we can assume that $\tau$ is large enough such that
$e^{-\frac{a_{u}}{2\tau r_1}}\geq 1/2$
which yields
\begin{align*}
\int_{\mathbb{R}^3}e^{-\frac{a_{u}}{\tau|v_1|}}f_{LR}(v)|v|^2dv\geq\frac{1}{4}\int_{\mathbb{R}^3}f_{LR}(v)|v|^2dv.
\end{align*}
That is,
\[
c_{\ell}\geq \frac{1}{8}c_u.
\]
$a_{\ell}$ and $\gamma_{\ell}$ can be treated similarly.
\end{remark}

%%%%%%%%%%%%%%%%%%%%%%%%%%%%%%%%%%%%%%%%%%%%%%%%%%%%%%%%%%%%%%%%%%%%%%%%%%%%%%%%%%%%%%%%%%%%%%%%%%%%%%%%%%%%%
%
%
%                                     fixed point
%
%
%%%%%%%%%%%%%%%%%%%%%%%%%%%%%%%%%%%%%%%%%%%%%%%%%%%%%%%%%%%%%%%%%%%%%%%%%%%%%%%%%%%%%%%%%%%%%%%%%%%%%%%%%%%%%

%%%%%%%%%%%%%%%%%%%%%%%%%%%%%%%%%%%%%%%%%%%%%%%%%%%%%%%%%%%%%%%%%%%%%%%%%%%%%%%%%%%%%%%%%%%%%%%%%%%%%%%%%%%%%
%
%
%                                     Notation
%
%
%%%%%%%%%%%%%%%%%%%%%%%%%%%%%%%%%%%%%%%%%%%%%%%%%%%%%%%%%%%%%%%%%%%%%%%%%%%%%%%%%%%%%%%%%%%%%%%%%%%%%%%%%%%%%
\section{Fixed point Set-up }
We will find the solution for (\ref{ESBGK}) as a fixed point of a solution map defined on the following function space:
\begin{align*}
\Omega=\Big\{f\in L^1_2([0,1]_x\times \mathbb{R}^3_v)~|&~f  \mbox{ satisfies } (\mathcal{A}), (\mathcal{B}), (\mathcal{C})
\Big\}
\end{align*}
endowed with the metric $\displaystyle d(f,g)=\sup_{x\in[0,1]}\|f-g\|_{L^1_2}$, where $(\mathcal{A})$, $(\mathcal{B})$ and $(\mathcal{C})$ denote
\begin{itemize}
\item ($\mathcal{A}$) $f$ is non-negative:
\[
f(x,v)\geq0 \mbox{ for }x,v\in [0,1]\times \mathbb{R}^3.
\]
\item ($\mathcal{B}$) The macroscopic field is well-defined:
\begin{align*}
&a_{\ell}\leq\int_{\mathbb{R}^3}f(x,v)dv\leq a_u,\quad c_{\ell}\leq\int_{\mathbb{R}^3}f(x,v)|v|^2dv\leq c_u.
\end{align*}
%\item ($\mathcal{B}$) Second and third component of bulk velocity vanishes:
%\begin{align*}\int_{\mathbb{R}^3}fv_2dv=\int_{\mathbb{R}^3}fv_3dv=0.\end{align*}
\item ($\mathcal{C}$) The following lower bound holds:
\begin{align*}
&\left(\int_{\mathbb{R}^3}fdv\right)\left(\int_{\mathbb{R}^3}f|v|^2dv\right)-\left|\int_{\mathbb{R}^3}fvdv\right|^2\geq \gamma_{\ell}.
\end{align*}
\end{itemize}
%Note (\ref{ESBGK}) can be rewritten in mild form as follows:
%%%%%%%%%%%%%%%%%%%%%%%%%%%%%%%%%%%%%%%%%%%%%%%%%%%%%%%%%%%%%%%%%%%%%%%%%%%%%%%%%%%%%%%%%%%%%%%%%%%%%%%%%%%%%%%%%%%%%%%%%%%%%%%%%%%%%%%%%%%%%%%%%%%%%%%%%%%%%%%%%55
 %%%%%%%%%%%%%%%%%%%%%%%%%%%%%%%%%%%%%%%%%%%%%%%%%%%%%%%%%%%%%%%%%%%%%%%%%%%%%%%%%%%%%%%%%%%%%%%%%%%%%%%%%%%%%%%%%%%%%%%%%%%%%%%%%%%%%%%%%%%%%%%%%%%%%%%%%%%%%%%%%%%%%%%%%%%%%%%%%%%%%55
In view of (\ref{mild f+}) and (\ref{mild f-}), we define our solution map by
\[
\Phi(f)=\Phi^+(f)1_{v_1>0}+ \Phi^-(f)1_{v_1<0},
\]
where $\Phi^+(f)$ and $\Phi^-(f)$ are
\begin{align}\label{f+}
\begin{split}
\Phi^+(f)(x,v)&=e^{-\frac{1}{\tau|v_1|}\int^x_0\rho_f(y)dy}f_{L}(v)\cr
&+\frac{1}{\tau|v_1|}\int_{0}^{x} e^{-\frac{1}{\tau|v_1|}\int^x_y\rho_f(z)dz}\rho_f(y)\mathcal{M}_{\nu}(f)dy
\quad\text{if $v_{1}>0$}
\end{split}
\end{align}
 and
\begin{align}\label{f-}
\begin{split}
\Phi^-(f)(x,v)&=e^{-\frac{1}{\tau|v_1|}\int^1_x\rho_f(y)dy}f_{R}(v)\cr
&+\frac{1}{\tau|v_1|}\int_{x}^1 e^{-\frac{1}{\tau|v_1|}\int^y_x\rho_f(z)dz}
\rho_f(y)\mathcal{M}_{\nu}(f)dy
\quad\text{if $v_{1}<0$}
\end{split}
\end{align}
Our main Theorem 2.2 then follows directly once we show that the solution map $\Phi$ is invariant and contractive in $\Omega$. The remaining of this paper is devoted to the proof of these two properties, which is stated in Proposition \ref{fixed point 1} and Proposition \ref{fixed point 2} respectively.
Before we move on to the existence proof, we record some technical lemmas that will be useful throughout this paper.
%%%%%%%%%%%%%%%%%%%%%%%%%%%%%%%%%%%%%%%%%%%%%%%%%%%%%%%%%%%%%%%%%%%%%%%%%%%%%%%%%%%%%%%%%%%%%%%%%%%%%%%%%%%%%
%
%
%                                      \mathcal{M}P 1
%
%
%%%%%%%%%%%%%%%%%%%%%%%%%%%%%%%%%%%%%%%%%%%%%%%%%%%%%%%%%%%%%%%%%%%%%%%%%%%%%%%%%%%%%%%%%%%%%%%%%%%%%%%%%%%%%

\begin{lemma}\emph{\cite{Yun2}}\label{equiv} Suppose $\rho(x)>0$. Then $\mathcal{T}_{\nu}$ satisfies
\begin{eqnarray*}
C^1_{\nu}TId\leq\mathcal{T}_{\nu}\leq C^2_{\nu}TId,
%&&\big\{C^1_{\nu}\}^3T^3\leq\det\mathcal{T}_{\nu}\leq \big\{C^2_{\nu}\}^3T^3,
\end{eqnarray*}
where
$C^1_{\nu}=\min\{1-\nu,1+2\nu\}$ and $C^2_{\nu}=\max\{1-\nu,1+2\nu\}$.
\end{lemma}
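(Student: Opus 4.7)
The plan is essentially linear algebra, exploiting the spectral structure of the stress tensor $\Theta$. First I would verify that $\Theta$ is symmetric (immediate from the definition $\rho\Theta=\int f(v-U)\otimes(v-U)\,dv$) and positive semi-definite: for any $k\in\mathbb{R}^3$,
\[
k^{\top}\Theta k = \frac{1}{\rho}\int_{\mathbb{R}^3}f(x,v)\big(k\cdot(v-U)\big)^2\,dv \geq 0,
\]
using $\rho>0$ and $f\geq 0$. Taking the trace of the defining identity for $\Theta$ and comparing with the definition of $T$ in (\ref{macroscopic field}) yields $\mathrm{tr}(\Theta)=3T$.

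Next I would diagonalize $\Theta$ by an orthogonal change of basis. Writing its eigenvalues as $\theta_1,\theta_2,\theta_3$, the positivity gives $\theta_i\geq 0$, while the trace identity $\theta_1+\theta_2+\theta_3=3T$ combined with non-negativity forces $0\leq \theta_i\leq 3T$ for each $i$. Since the identity matrix is unchanged by an orthogonal transformation, $\mathcal{T}_\nu=(1-\nu)T\,\mathrm{Id}+\nu\Theta$ is simultaneously diagonal in the same basis, with eigenvalues $\lambda_i=(1-\nu)T+\nu\theta_i$.

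Finally, I would split into cases on the sign of $\nu$. When $\nu\geq 0$, the affine map $\theta\mapsto(1-\nu)T+\nu\theta$ is non-decreasing on $[0,3T]$, so $\lambda_i\in[(1-\nu)T,(1+2\nu)T]$. When $\nu<0$, the map is decreasing and the two endpoints swap. In either case $\lambda_i\in[C^1_\nu T,\,C^2_\nu T]$, which is exactly the claimed matrix inequality once we lift the one-dimensional bounds back through the orthogonal change of basis. The only point requiring mild care is the upper bound $\theta_i\leq 3T$, which genuinely needs both the non-negativity of every eigenvalue and the trace identity simultaneously; otherwise there is no analytic step, which is consistent with the lemma being cited as folklore from \cite{Yun2}.
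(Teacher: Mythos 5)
Your argument is correct and is the standard proof of this fact (the paper itself gives no proof, citing \cite{Yun2}, where essentially this spectral argument appears): diagonalize the symmetric matrix $\Theta$, use positive semi-definiteness together with $\mathrm{tr}(\Theta)=3T$ to place each eigenvalue in $[0,3T]$, and push these bounds through the affine map $\theta\mapsto(1-\nu)T+\nu\theta$, splitting on the sign of $\nu$. The only point worth flagging is that positive semi-definiteness of $\Theta$ requires $f\geq 0$, which the stated hypothesis $\rho(x)>0$ does not by itself supply; this is harmless here since every $f\in\Omega$ satisfies condition $(\mathcal{A})$, but it is an implicit assumption you correctly identified as essential.
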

\begin{lemma}\label{UT lemma}
Let $f\in\Omega$. Then the macroscopic fields $U$, $T$ constructed from $f$  satisfies
\begin{align*}
|U|\leq \frac{a_u+c_u}{2a_{\ell}},\quad\frac{\gamma_{\ell}}{3a_u^2}\leq T\leq \frac{c_u}{3a_{\ell}}
\end{align*}
and
\[
C^1_{\nu}\frac{\gamma_{\ell}}{3a_u^2}|k|^2\leq k^{\top}\mathcal{T}_{\nu}k\leq
C^2_{\nu}\frac{c_u}{3a_{\ell}}|k|^2,
\]
for all $k\in \mathbb{R}^3$.
\end{lemma}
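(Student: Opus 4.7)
The plan is to extract each of the three bounds from the defining conditions $(\mathcal{B})$ and $(\mathcal{C})$ of the space $\Omega$, and then read off the tensor estimate from Lemma \ref{equiv}. All estimates rely on the fact that, for $f \in \Omega$, we have $\rho \geq a_\ell > 0$, so the macroscopic quantities $U$ and $T$ are well-defined by (\ref{macroscopic field}).

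First I would bound $|U|$. Since $\rho U = \int_{\mathbb{R}^3} f v\, dv$, the elementary estimate $|v| \leq \tfrac{1}{2}(1 + |v|^2)$ gives
\begin{equation*}
\rho |U| \leq \int_{\mathbb{R}^3} f |v|\, dv \leq \tfrac{1}{2}\int_{\mathbb{R}^3} f (1 + |v|^2)\, dv \leq \tfrac{1}{2}(a_u + c_u),
\end{equation*}
and dividing by $\rho \geq a_\ell$ yields the stated inequality. Next, for the upper bound on $T$, I expand
\begin{equation*}
3\rho T = \int_{\mathbb{R}^3} f |v - U|^2\, dv = \int_{\mathbb{R}^3} f |v|^2\, dv - \rho |U|^2,
\end{equation*}
and discard the non-positive term to obtain $3\rho T \leq c_u$, hence $T \leq c_u/(3a_\ell)$.

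The lower bound on $T$ is the one place where condition $(\mathcal{C})$ is used. Multiplying the identity above by $\rho$ gives
\begin{equation*}
3\rho^2 T = \rho \int_{\mathbb{R}^3} f|v|^2\, dv - (\rho |U|)^2 = \left(\int_{\mathbb{R}^3} f\, dv\right)\left(\int_{\mathbb{R}^3} f|v|^2\, dv\right) - \left|\int_{\mathbb{R}^3} f v\, dv\right|^2 \geq \gamma_\ell
\end{equation*}
by $(\mathcal{C})$. Since $\rho \leq a_u$, this gives $T \geq \gamma_\ell/(3a_u^2)$, as claimed.

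Finally, because $\rho \geq a_\ell > 0$, Lemma \ref{equiv} applies and furnishes
\begin{equation*}
C_\nu^1 T |k|^2 \leq k^\top \mathcal{T}_\nu k \leq C_\nu^2 T |k|^2 \quad \text{for all } k \in \mathbb{R}^3,
\end{equation*}
and substituting the two-sided bound on $T$ from the previous step yields the final assertion. None of the steps is an obstacle; the only conceptual point is the rewriting $3\rho^2 T = \rho \int f|v|^2 - |\int fv|^2$, which is precisely the expression appearing in $(\mathcal{C})$ and is what motivated the formulation of that condition in the definition of $\Omega$.
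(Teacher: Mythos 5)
Your proposal is correct and follows essentially the same route as the paper: bound $\rho|U|$ by $\tfrac12(a_u+c_u)$ and divide by $\rho\geq a_\ell$, write $3\rho^2 T=\bigl(\int f\,dv\bigr)\bigl(\int f|v|^2dv\bigr)-\bigl|\int fv\,dv\bigr|^2$ to get both bounds on $T$ from $(\mathcal{B})$ and $(\mathcal{C})$, and then invoke Lemma \ref{equiv}. If anything, your explicit appeal to condition $(\mathcal{C})$ for the lower bound on $T$ is slightly cleaner than the paper's reference to (\ref{gamma}).
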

\begin{proof}
For the first inequality, we compute
\begin{align*}
|U|=\frac{|\rho U|}{\rho}=\frac{\Big|\int_{\mathbb{R}^3}fvdv\Big|}{\int_{\mathbb{R}^3}fdv}\leq \frac{a_u+c_u}{2a_{\ell}}.
\end{align*}
Besides, we observe that $T$ is represented as
\begin{align*}
T&=\frac{(3\rho T+\rho|U|^2)-|\rho U|^2\rho^{-1}}{3\rho}\cr
&=\frac{\int_{\mathbb{R}^3}f|v|^2dv-\Big|\int_{\mathbb{R}^3}fvdv\Big|^2\Big(\int_{\mathbb{R}^3}fdv\Big)^{-1}}
{3\int_{\mathbb{R}^3}fdv}.
\end{align*}
We then ignore the last term on the numerator to get
\begin{align*}
T&\leq\frac{\int_{\mathbb{R}^3}f|v|^2dv}
{3\int_{\mathbb{R}^3}fdv}
%&\leq \frac{a_{\ell}c_u}{3a_{\ell}^2}\cr
\leq \frac{c_u}{3a_{\ell}}\cr
\end{align*}
For the lower bound, we recall the definition of $\gamma_{\ell}$ in (\ref{gamma}) to obtain
\begin{align*}
T&=\frac{\Big(\int_{\mathbb{R}^3}fdv\Big)\Big(\int_{\mathbb{R}^3}f|v|^2dv\Big)-\Big|\int_{\mathbb{R}^3}fvdv\Big|^2}
{3\Big(\int_{\mathbb{R}^3}fdv\Big)^2}\geq \frac{\gamma_{\ell}}{3a_{u}^2}.
\end{align*}
Now, the last assertion follows directly from this estimates on $T$ and Lemma \ref{equiv}.
\end{proof}

%%%%%%%%%%%%%%%%%%%%%%%%%%%%%%%%%%%%%%%%%%%%%%%%%%%%%%%%%%%%%%%%%%%%%%%%%%%%%%%%%%%%%%%%%%%%%%%%%%%%%%%%%%%%%%%%%%%%%%
\begin{lemma}\label{Max decomposition} Let $f\in\Omega$. Then there exists  positive constants $C_{\ell,u}$  depending only on the quantities (\ref{quantities}) and $\gamma_{\ell}$ such that
\[
\mathcal{M}_{\nu}(f)(1+|v|^2)\leq  C_{\ell,u}e^{-C_{\ell,u}|v|^2}.
\]
\end{lemma}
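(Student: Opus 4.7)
The plan is to combine the upper bound on $\rho$ from $(\mathcal{B})$ with the spectral bounds on $\mathcal{T}_{\nu}$ from Lemma \ref{UT lemma} to reduce $\mathcal{M}_{\nu}(f)$ to a Gaussian with fixed width, then trade a sliver of the exponent for the polynomial factor $1+|v|^2$.

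First, I would use Lemma \ref{UT lemma} to extract three inputs. From the eigenvalue bounds $C^1_{\nu}\tfrac{\gamma_{\ell}}{3a_u^2}|k|^2 \le k^{\top}\mathcal{T}_{\nu}k\le C^2_{\nu}\tfrac{c_u}{3a_{\ell}}|k|^2$, one reads off
\[
k^{\top}\mathcal{T}_{\nu}^{-1}k\ \geq\ \frac{3a_{\ell}}{C^2_{\nu}c_u}\,|k|^2,
\qquad
\det(\mathcal{T}_{\nu})\ \geq\ \Bigl(C^1_{\nu}\tfrac{\gamma_{\ell}}{3a_u^2}\Bigr)^{3},
\]
and $|U|\le (a_u+c_u)/(2a_{\ell})$. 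All of these quantities depend only on the items listed in (\ref{quantities}), on $\gamma_{\ell}$, and on $\nu$, so each qualifies as a $C_{\ell,u}$.

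Next, since $\rho\le a_u$ by $(\mathcal{B})$, plugging the above into the definition of $\mathcal{M}_{\nu}(f)$ yields
\[
\mathcal{M}_{\nu}(f)\ \leq\ \frac{a_u}{(2\pi)^{3/2}\sqrt{\det\mathcal{T}_{\nu}}}\,\exp\!\Bigl(-\tfrac{3a_{\ell}}{2C^2_{\nu}c_u}\,|v-U|^2\Bigr)\ \leq\ C_{\ell,u}\,e^{-C_{\ell,u}|v-U|^2}.
\]
To switch from $|v-U|^2$ to $|v|^2$, I would apply the elementary inequality $|v-U|^2\ge \tfrac{1}{2}|v|^2-|U|^2$, which, because $|U|$ is bounded by a $C_{\ell,u}$, simply contributes a multiplicative constant of the same kind and gives
\[
\mathcal{M}_{\nu}(f)\ \leq\ C_{\ell,u}\,e^{-C_{\ell,u}|v|^2}.
\]

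Finally, to absorb the prefactor $1+|v|^2$, I would split the remaining exponent in half: since $(1+|v|^2)e^{-\alpha|v|^2}\le (1+\alpha^{-1})e^{-\alpha|v|^2/2}$ for $\alpha>0$, choosing $\alpha$ equal to the constant produced in the previous step (still a $C_{\ell,u}$) gives the desired bound
\[
\mathcal{M}_{\nu}(f)(1+|v|^2)\ \leq\ C_{\ell,u}\,e^{-C_{\ell,u}|v|^2}
\]
after renaming the constants. There is no real obstacle here beyond careful bookkeeping of the constants; the only point that requires invoking $\gamma_{\ell}$ and assumption $(\mathcal{C})$ is the uniform lower bound on $T$ (and hence on the eigenvalues of $\mathcal{T}_{\nu}$ and on $\det\mathcal{T}_{\nu}$), which is precisely why the Gaussian does not degenerate and the argument goes through.
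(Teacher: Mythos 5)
Your proposal is correct and follows essentially the same route as the paper: both use the spectral bounds of Lemma \ref{equiv}/Lemma \ref{UT lemma} to control $\det\mathcal{T}_{\nu}$ from below and $(v-U)^{\top}\mathcal{T}_{\nu}^{-1}(v-U)$ from below by a multiple of $|v-U|^2$, then peel off the $|U|$-shift and absorb the polynomial factor by halving the Gaussian exponent. The only cosmetic difference is that the paper keeps the intermediate bounds expressed in terms of $T$ and substitutes the numerical bounds of Lemma \ref{UT lemma} at the very end, whereas you plug them in immediately.
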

\begin{remark} The two $C_{\ell,u}$ do not necessarily represent the same constant.
%(2) Recalling our notational convention, it can be restated as
%\[
%\mathcal{M}_{\nu}(f)(1+|v|^2)\leq C_{\ell,u}\mathcal{M}_1(v_1)\mathcal{M}_2(v_2)\mathcal{M}_3(v_3)
%\]
%which will be convenient later.
\end{remark}
\begin{proof}
We only consider $\mathcal{M}_{\nu}|v|^2$ since the estimate for $\mathcal{M}_{\nu}$ is similar and simpler.
Since $\mathcal{T}_{\nu}$ is symmetric, it is diagonalizable. Let $\lambda_i$ ($i=1,2,3$) denote the eigenvalues. Then,
from Lemma \ref{equiv}, we can deduce that
\[
\det{\mathcal{T}_{\nu}}=\lambda_1\lambda_2\lambda_3\geq \{C^1_{\nu}\}^3T^3.
\]
On the other hand, Lemma \ref{equiv} also implies
\[
-(v-U)^{\top}\mathcal{T}^{-1}_{\nu}(v-U)\leq -\frac{|v-U|^2}{C^2_{\nu}T}.
\]
Therefore, we have
\begin{align*}
\mathcal{M}_{\nu}(f)|v|^2&\leq \frac{\rho}{\{2\pi C^1_{\nu}\}^{3/2}T^{3/2}}e^{-\frac{|v-U|^2}{2C_{\nu}^2T}}|v|^2\cr
&\leq \frac{\rho}{\{2\pi C^1_{\nu}\}^{3/2}T^{3/2}}e^{\frac{|U|^2}{2C_{\nu}^2T}}e^{-\frac{|v|^2}{2C_{\nu}^2T}}|v|^2\cr
&\leq \frac{C C_{\nu}^2\rho }{\{2\pi C^1_{\nu}\}^{3/2}T^{1/2}}e^{\frac{|U|^2}{2C_{\nu}^2T}}e^{-\frac{|v|^2}{4C_{\nu}^2T}}
\end{align*}
where we have used the boundedness: $x^2e^{-x^2}<C$ for some $C>0$.
We then apply the lower and upper bounds of Lemma \ref{UT lemma} to get the desired result.
\end{proof}
%%%%%%%%%%%%%%%%%%%%%%%%%%%%%%%%%%%%%%%%%%%%%%%%%%%%%%%%%%%%%%%%%%%%%%%%%%%%%%%%%%%%%%%%%%%%%%%%%%%%%%%%%%%%%
%
%
%                                      \mathcal{M}P 1
%
%
%%%%%%%%%%%%%%%%%%%%%%%%%%%%%%%%%%%%%%%%%%%%%%%%%%%%%%%%%%%%%%%%%%%%%%%%%%%%%%%%%%%%%%%%%%%%%%%%%%%%%%%%%%%%%
\section{ $\Phi$ maps $\Omega$ into itself }
The main result of this section is the following proposition, which says that the elements of $\Omega$ is mapped into
$\Omega$ by our solution map $\Phi$:
\begin{proposition}\label{fixed point 1}
Let $f\in\Omega$. Then, under the assumption of Theorem 2.2, we have
\[
\Phi(f)\in \Omega.
\]
\end{proposition}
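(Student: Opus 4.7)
The plan is to verify the three conditions $(\mathcal{A})$, $(\mathcal{B})$, $(\mathcal{C})$ defining $\Omega$ one by one for $\Phi(f)$, exploiting the largeness of $\tau$ whenever quantitative smallness is needed. Non-negativity $(\mathcal{A})$ is immediate: each term in the explicit formulas \eqref{f+}--\eqref{f-} is a product of non-negative quantities, since $f_L,f_R,\rho_f,\mathcal{M}_\nu(f)\geq 0$ and the exponentials are positive.

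For the bounds in $(\mathcal{B})$, I would split $\Phi(f)$ into the boundary piece $B$ (the first term of \eqref{f+}--\eqref{f-}) and the non-negative integral piece $I$. For the upper bound on $\int\Phi(f)dv$ and $\int\Phi(f)|v|^2 dv$, the boundary piece is easily controlled: since each exponential is $\leq 1$, one obtains
\begin{equation*}
\int B\,dv \leq \int f_{LR}\,dv = \tfrac{a_u}{2},\qquad \int B|v|^2dv \leq \tfrac{c_u}{2}.
\end{equation*}
For the integral piece I would first use Lemma~\ref{Max decomposition} to replace $\mathcal{M}_\nu(f)$ (resp.\ $\mathcal{M}_\nu(f)|v|^2$) by $C_{\ell,u}e^{-C_{\ell,u}|v|^2}$, then by Fubini do the elementary $y$-substitution
\begin{equation*}
\int_0^x e^{-\frac{1}{\tau|v_1|}\int_y^x\rho_f(z)dz}\frac{\rho_f(y)}{\tau|v_1|}dy=1-e^{-\frac{1}{\tau|v_1|}\int_0^x\rho_f(z)dz},
\end{equation*}
so the integral piece is bounded by $C_{\ell,u}\int_{v_1>0}e^{-C_{\ell,u}|v|^2}\min\!\big(\tfrac{a_u}{\tau|v_1|},1\big)dv$ using $1-e^{-A}\leq\min(A,1)$ and $\rho_f\leq a_u$. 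Splitting this last integral into the regions $|v_1|<\tau^{-1/2}$ (bound the minimum by $1$, use that the slab has width $\tau^{-1/2}$) and $|v_1|\geq\tau^{-1/2}$ (bound the minimum by $a_u/(\tau|v_1|)\leq a_u\tau^{-1/2}$) shows the integral piece is $O(\tau^{-1/2})$. Thus for $\tau$ large the total is $\leq a_u$ (resp.\ $c_u$). The lower bounds on $\int\Phi(f)dv$ and $\int\Phi(f)|v|^2 dv$ are easier: discarding the non-negative integral piece and using $\int_0^1\rho_f\leq a_u$ in the exponent,
\begin{equation*}
\int\Phi(f)dv\geq\int_{v_1>0}e^{-\frac{a_u}{\tau|v_1|}}f_L\,dv+\int_{v_1<0}e^{-\frac{a_u}{\tau|v_1|}}f_R\,dv=a_\ell,
\end{equation*}
and likewise one gets $\int\Phi(f)|v|^2dv\geq c_\ell$ by definition of $c_\ell$.

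For $(\mathcal{C})$ the clean tool is the symmetric identity
\begin{equation*}
\Big(\int g\,dv\Big)\Big(\int g|v|^2dv\Big)-\Big|\int gv\,dv\Big|^2=\tfrac{1}{2}\iint|v-w|^2g(v)g(w)\,dv\,dw,
\end{equation*}
valid for any $g\geq 0$. Applied to $g=\Phi(f)$ and using $\Phi(f)\geq B$, one lower-bounds the right-hand side by restricting to the cross-region $\{v_1>0,w_1<0\}\cup\{v_1<0,w_1>0\}$, keeping only the $(v_1-w_1)^2$ component of $|v-w|^2$ and using the AM--GM bound $(v_1-w_1)^2\geq 4|v_1||w_1|$ in this region. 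After using $B\geq e^{-a_u/(\tau|v_1|)}f_{LR}$ (again from $\rho_f\leq a_u$) the expression factors into exactly the product appearing on the left side of \eqref{gamma}, so the whole thing is $\geq 4\gamma_\ell\geq\gamma_\ell$.

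The main obstacle I expect is the upper-bound argument for $\int\Phi(f)|v|^2dv$ in $(\mathcal{B})$: the Gaussian control $\mathcal{M}_\nu(f)|v|^2\leq C_{\ell,u}e^{-C_{\ell,u}|v|^2}$ of Lemma~\ref{Max decomposition} leaves a $1/|v_1|$ singularity in the $y$-integrated characteristic formula, and a naive use of either $1-e^{-A}\leq 1$ or $1-e^{-A}\leq A$ alone fails — the former gives no smallness in $\tau$, the latter blows up near $v_1=0$. The two-region split at $|v_1|\sim\tau^{-1/2}$ is what makes the integral piece genuinely $o(1)$ as $\tau\to\infty$ and hence allows the boundary part $c_u/2$ to be absorbed below $c_u$ for $\tau$ sufficiently large; the rest of the verification is softer once this is in hand.
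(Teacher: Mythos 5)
Your proof is correct, and while its overall architecture (checking $(\mathcal{A})$, $(\mathcal{B})$, $(\mathcal{C})$ separately, splitting $\Phi(f)$ into a boundary piece and a collision piece, and using largeness of $\tau$ to absorb the latter) matches the paper, two of your key steps take genuinely different routes. For the upper bounds in $(\mathcal{B})$, the paper does not exploit the exact telescoping $y$-integration followed by $1-e^{-A}\leq\min(A,1)$; instead it bounds $\rho_f(y)\leq a_u$ outside and $\rho_f(z)\geq a_\ell$ in the exponent, then splits the $v_1$-domain into $|v_1|<1/\tau$, $1/\tau<|v_1|<\tau$ and $|v_1|>\tau$, handling the middle region by a term-by-term Taylor expansion of $1-e^{-a_\ell/(\tau|v_1|)}$; this yields the bound $C_{\ell,u}(\ln\tau+1)/\tau$, slightly sharper than your $O(\tau^{-1/2})$, but both are $o(1)$ and either suffices. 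For $(\mathcal{C})$ your route is cleaner and strictly stronger: the variance identity $\bigl(\int g\bigr)\bigl(\int g|v|^2\bigr)-\bigl|\int gv\bigr|^2=\tfrac12\iint|v-w|^2g(v)g(w)\,dv\,dw$, restricted to the cross region and combined with $(v_1-w_1)^2\geq 4|v_1||w_1|$ there, gives the lower bound $4\gamma_\ell$ exactly, with no error term, no largeness requirement on $\tau$ for this step, and --- notably --- no use of the hypothesis \eqref{0} on vanishing transverse momentum of the boundary data. The paper instead applies Cauchy--Schwarz to reduce to $\bigl(\int\Phi(f)|v_1|dv\bigr)^2-\bigl|\int\Phi(f)v\,dv\bigr|^2$, must then control the transverse moments $\int\Phi(f)v_i\,dv$ ($i=2,3$) by a separate lemma (Lemma \ref{23}) that relies on \eqref{0} and produces an $O((\ln\tau+1)/\tau)$ remainder to be absorbed by taking $\tau$ large, and only then uses the factorization $a^2-b^2=(a-b)(a+b)$ to reach $4\gamma_\ell$. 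Your argument thus shows in passing that the assumption \eqref{0} is dispensable for the invariance step.
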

We divide the proof into Lemma \ref{geq},\ref{al},\ref{au}, and Lemma \ref{gl}.
\begin{lemma}\label{geq} Let $f\in \Omega$. Then
\[
\Phi(f)(x,v)\geq0.
\]
\end{lemma}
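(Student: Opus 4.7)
The plan is to observe that every ingredient appearing in the explicit representation \eqref{f+}--\eqref{f-} of $\Phi(f)$ is non-negative, so the claim reduces to verifying positivity of three basic objects: the boundary data $f_{L}, f_{R}$, the local density $\rho_{f}$, and the ellipsoidal Gaussian $\mathcal{M}_{\nu}(f)$.

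First I would note that $f_{L}, f_{R} \geq 0$ by hypothesis of Theorem \ref{Main}, and the exponentially-decaying weights of the form $\exp\!\bigl(-\tfrac{1}{\tau|v_{1}|}\int\rho_{f}\bigr)$ are strictly positive as long as $\rho_{f}$ is integrable along $[0,1]$, which is guaranteed by property $(\mathcal{B})$: indeed $0<a_{\ell}\leq \rho_{f}(y)\leq a_{u}$ for every $y\in[0,1]$. In particular $\rho_{f}(y)$ itself is positive, so the integrand of the gain term is a product of non-negative factors provided $\mathcal{M}_{\nu}(f)\geq 0$.

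The one point that requires genuine justification is that the ellipsoidal Gaussian $\mathcal{M}_{\nu}(f)$ is well-defined and non-negative at each $x\in[0,1]$. Since $\mathcal{M}_{\nu}(f)$ is, up to the positive density factor $\rho$, a Gaussian determined by the temperature tensor $\mathcal{T}_{\nu}$, it suffices to show that $\mathcal{T}_{\nu}$ is symmetric and positive definite, so that $\det(2\pi\mathcal{T}_{\nu})>0$ and $\mathcal{T}_{\nu}^{-1}$ exists. This is precisely the content of Lemma \ref{UT lemma}: since $f\in\Omega$ satisfies conditions $(\mathcal{B})$ and $(\mathcal{C})$, the macroscopic temperature obeys $T\geq \gamma_{\ell}/(3a_{u}^{2})>0$, and combined with Lemma \ref{equiv} this yields $k^{\top}\mathcal{T}_{\nu}k \geq C^{1}_{\nu}\tfrac{\gamma_{\ell}}{3a_{u}^{2}}|k|^{2}>0$ for every non-zero $k\in\mathbb{R}^{3}$. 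Hence $\mathcal{M}_{\nu}(f)(x,v)\geq 0$ pointwise.

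Putting the pieces together, for $v_{1}>0$ both terms on the right-hand side of \eqref{f+} are non-negative: the first is a non-negative multiple of $f_{L}(v)\geq 0$, and the second is the integral over $y\in[0,x]$ of a product of non-negative quantities (a positive exponential weight, the positive factor $\tfrac{1}{\tau|v_{1}|}$, the non-negative density $\rho_{f}(y)$, and the non-negative Gaussian $\mathcal{M}_{\nu}(f)(y,v)$). The identical argument applied to \eqref{f-} handles the case $v_{1}<0$. Therefore $\Phi(f)(x,v)\geq 0$ for all $(x,v)\in[0,1]\times\mathbb{R}^{3}$, completing the proof. There is no real obstacle in this lemma; the conceptual content is entirely absorbed into the positivity of $\mathcal{T}_{\nu}$, which in turn is the reason the conditions $(\mathcal{B})$ and $(\mathcal{C})$ were built into the definition of $\Omega$.
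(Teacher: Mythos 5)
Your proof is correct and follows essentially the same route as the paper: both reduce the claim to the non-negativity of $f_{LR}$, the positivity of $\rho_f$ from property $(\mathcal{B})$, and the non-negativity of $\mathcal{M}_{\nu}(f)$ via Lemma \ref{UT lemma} (the paper bounds the prefactor $\rho/\sqrt{\det(2\pi\mathcal{T}_{\nu})}$ from below using the upper bound on $T$, while you emphasize the positive definiteness of $\mathcal{T}_{\nu}$, but this is the same content of that lemma).
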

\begin{proof}
From the proof of Lemma \ref{Max decomposition}, we see that
\[
\frac{\rho}{\sqrt{\det{2\pi\mathcal{T}_{\nu}}}}\geq\frac{\rho}{\{2\pi C^2_{\nu}T\}^{3/2}},
\]
which, in view of Lemma \ref{UT lemma}, implies
\[
\frac{\rho}{\sqrt{\det{2\pi\mathcal{T}_{\nu}}}}\geq a_{\ell}\left(\frac{3a_{\ell}}{2\pi c_{u}}\right)^{3/2}>0.
\]
Hence, we have
\[
\mathcal{M}_{\nu}(f)>0.
\]
Therefore, we can ignore the second term in (\ref{f+}) to conclude
\begin{align*}
\Phi^+(f)(x,v)\geq e^{-\frac{1}{\tau|v_1|}\int^x_0\rho_f(y)dy}f_{L}(v)\geq0.
\end{align*}
Similarly, we have
\begin{align*}
\Phi^-(f)(x,v)\geq0.
\end{align*}
This completes the proof.
\end{proof}
\begin{lemma}\label{al} Assume $f\in \Omega$. Then we have
\begin{align*}
\int_{\mathbb{R}^3}\Phi(f)dv\geq a_{\ell},\quad \int_{\mathbb{R}^3}\Phi(f)|v|^2dv\geq c_{\ell}.
\end{align*}
\end{lemma}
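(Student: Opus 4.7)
The plan is to obtain the lower bounds by simply discarding the (nonnegative) gain terms in the mild formulas \eqref{f+} and \eqref{f-} and using the upper bound on $\rho_f$ that is built into the definition of $\Omega$.

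First I would observe that, by Lemma \ref{geq} (more precisely by the positivity of $\mathcal{M}_\nu(f)$ established there together with $\rho_f\ge a_\ell>0$), the integral term in \eqref{f+} is nonnegative, so for $v_1>0$,
\[
\Phi^+(f)(x,v) \;\ge\; e^{-\frac{1}{\tau|v_1|}\int_0^x \rho_f(y)\,dy}\,f_L(v),
\]
and symmetrically for $v_1<0$,
\[
\Phi^-(f)(x,v) \;\ge\; e^{-\frac{1}{\tau|v_1|}\int_x^1 \rho_f(y)\,dy}\,f_R(v).
\]

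Next I would use the upper bound $\rho_f(y)=\int_{\mathbb{R}^3}f(y,v)\,dv\le a_u$ from property $(\mathcal{B})$, together with $x\in[0,1]$, to conclude that
\[
\int_0^x \rho_f(y)\,dy \;\le\; a_u, \qquad \int_x^1 \rho_f(y)\,dy \;\le\; a_u,
\]
so both exponential factors are bounded below by $e^{-a_u/(\tau|v_1|)}$. This gives the uniform (in $x$) pointwise lower bound
\[
\Phi(f)(x,v) \;\ge\; e^{-\frac{a_u}{\tau|v_1|}}\,f_{LR}(v),
\]
using the abbreviation $f_{LR}=f_L 1_{v_1>0}+f_R 1_{v_1<0}$.

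Finally, integrating this pointwise bound against $dv$ and against $|v|^2\,dv$ over $\mathbb{R}^3$ and recalling the definitions
\[
a_\ell=\int_{\mathbb{R}^3} e^{-\frac{a_u}{\tau|v_1|}}f_{LR}\,dv,\qquad c_\ell=\int_{\mathbb{R}^3} e^{-\frac{a_u}{\tau|v_1|}}f_{LR}|v|^2\,dv
\]
from \eqref{quantities} yields
\[
\int_{\mathbb{R}^3}\Phi(f)\,dv \ge a_\ell,\qquad \int_{\mathbb{R}^3}\Phi(f)|v|^2\,dv \ge c_\ell,
\]
which is exactly what we need. There is no real obstacle here; the only point requiring care is making sure the gain term can indeed be dropped (nonnegativity of $\mathcal{M}_\nu(f)$ and of $\rho_f$), which is already in place from Lemma \ref{geq} and property $(\mathcal{B})$.
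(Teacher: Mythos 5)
Your proposal is correct and follows essentially the same route as the paper: drop the nonnegative gain term in the mild formulation, bound the exponential from below using $\rho_f\le a_u$ and $x\in[0,1]$ to get $\Phi(f)\ge e^{-a_u/(\tau|v_1|)}f_{LR}$, and integrate against $dv$ and $|v|^2dv$ to recover the definitions of $a_\ell$ and $c_\ell$. No issues.
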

\begin{proof}
We only prove the second one. Recall from the previous proof that
\begin{eqnarray*}
\Phi(f)\geq e^{-\frac{1}{\tau|v_1|}\int^x_0\rho_f(y)dy}f_{L}(v)1_{v_1>0}
+e^{-\frac{1}{\tau|v_1|}\int^1_x\rho_f(y)dy}f_{R}(v)1_{v_1<0}.
\end{eqnarray*}
Then, since $\rho_f\leq a_u$, we have
\begin{align*}
\Phi(f)&\geq e^{-\frac{xa_u}{\tau|v_1|}}f_{L}(v)1_{v_1>0}+e^{-\frac{(1-x)a_u}{\tau|v_1|}}f_{R}(v)1_{v_1<0}\cr
&\geq e^{-\frac{a_u}{\tau|v_1|}}f_{L}(v)1_{v_1>0}+e^{-\frac{a_u}{\tau|v_1|}}f_{R}(v)1_{v_1<0}\cr
&=e^{-\frac{a_u}{\tau|v_1|}}f_{LR}.
\end{align*}
Integrating with respect to $|v|^2dv$, we obtain the desired lower bound:
\begin{align*}
\int_{\mathbb{R}^3}\Phi(f)|v|^2dv
\geq\int_{\mathbb{R}^3}e^{-\frac{a_u }{\tau|v_1|}}f_{LR}|v|^2dv
\geq c_{\ell}.
\end{align*}

\end{proof}
%%%%%%%%%%%%%%%%%%%%%%%%%%%%%%%%%%%%%%%%%%%%%%%%%%%%%%%%%%%%%%%%%%%%%%%%%%%%%%%%%%%%%%%%%%%%%%%%%%%%%%%%%%%%%
% %%%%%%%%%%%%%%%%%%%%%%%%%%%%%%%%%%%%%%%%%%%%%%%%%%%%%%%%%%%%%%%%%%%%%%%%%%%%%%%%%%%%%%%%%%%%%%%%%%%%%%%%%%%%%
% %
% %
% %                                  property 1
% %
% %
% %%%%%%%%%%%%%%%%%%%%%%%%%%%%%%%%%%%%%%%%%%%%%%%%%%%%%%%%%%%%%%%%%%%%%%%%%%%%%%%%%%%%%%%%%%%%%%%%%%%%%%%%%%%%
%%%%%%%%%%%%%%%%%%%%%%%%%%%%%%%%%%%%%%%%%%%%%%%%%%%%%%%%%%%%%%%%%%%%%%%%%%%%%%%%%%%%%%%%%%%%%%%%%%%%%%%%%%%%%
\begin{lemma}\label{au} Let $f\in \Omega$. Then we have
\begin{align}\label{numberr}
\int_{\mathbb{R}^3}\Phi(f)dv\leq a_u,%\quad\Big|\int_{\mathbb{R}^3}\Phi(f)vdv\Big|\leq b_u,
\quad
\int_{\mathbb{R}^3}\Phi(f)|v|^2dv\leq c_u.
\end{align}
\end{lemma}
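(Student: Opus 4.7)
The plan is to split $\Phi(f)$ into a boundary transport piece and a gain integral piece, and bound them separately. The key structural observation is that the definitions $a_u = 2\int f_{LR}\,dv$ and $c_u = 2\int f_{LR}|v|^2\,dv$ carry a factor-of-two slack, into which I will absorb the gain contribution by taking $\tau$ large. Since the exponential factor in (\ref{f+})--(\ref{f-}) never exceeds $1$, the boundary piece of $\Phi(f)$ is dominated pointwise by $f_{LR}(v)=f_L(v)\,1_{v_1>0}+f_R(v)\,1_{v_1<0}$, whose mass and second moment are exactly $a_u/2$ and $c_u/2$. It therefore suffices to show that the gain integral piece contributes at most $a_u/2$ and $c_u/2$ respectively.

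For the gain term in $\Phi^+(f)$, I would apply Lemma \ref{Max decomposition} to bound $\mathcal{M}_\nu(f)(y,v)\leq C_{\ell,u}\,e^{-C_{\ell,u}|v|^2}$ uniformly in $y$ (the constants depend only on the $\Omega$-data). Using the elementary identity
\[
\frac{1}{\tau|v_1|}\int_0^x e^{-\frac{1}{\tau|v_1|}\int_y^x \rho_f(z)\,dz}\rho_f(y)\,dy \;=\; 1 - e^{-\frac{1}{\tau|v_1|}\int_0^x \rho_f(z)\,dz},
\]
together with $0\leq 1-e^{-\beta}\leq \min(1,\beta)$ and $\rho_f\leq a_u$, I would obtain
\[
\Phi^+_{\mathrm{int}}(f)(x,v) \;\leq\; C_{\ell,u}\, e^{-C_{\ell,u}|v|^2}\,\min\bigl(1,\,\tfrac{a_u}{\tau|v_1|}\bigr),
\]
and the analogous estimate for $\Phi^-_{\mathrm{int}}$ on $v_1<0$.

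Integrating this pointwise bound against $dv$ and $|v|^2\,dv$, both of the integrals
\[
\int_{\mathbb{R}^3}e^{-C_{\ell,u}|v|^2}\min\bigl(1,\,\tfrac{a_u}{\tau|v_1|}\bigr)\,dv, \qquad \int_{\mathbb{R}^3}|v|^2 e^{-C_{\ell,u}|v|^2}\min\bigl(1,\,\tfrac{a_u}{\tau|v_1|}\bigr)\,dv
\]
tend to $0$ as $\tau\to\infty$ by dominated convergence, since the integrands are dominated by integrable Gaussians and converge pointwise to $0$ for $v_1\neq 0$. Choosing $\tau$ large enough that the first integral is at most $a_u/2$ and the second at most $c_u/2$ (a threshold depending only on the quantities in (\ref{quantities}) and on $\gamma_\ell$) then yields (\ref{numberr}). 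The only obstacle worth noting is that this threshold on $\tau$ must eventually be made compatible with the thresholds arising from the discriminant bound (\(\mathcal{C}\)) in Lemma \ref{gl} and from the contractivity estimate of Proposition \ref{fixed point 2}; taking the maximum of all such thresholds is what ultimately fixes the constant $K$ in Theorem \ref{Main}.
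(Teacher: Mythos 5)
Your proof is correct, and its skeleton is the same as the paper's: drop the exponential damping factor to bound the boundary piece by $f_{LR}$, whose mass and second moment are exactly $a_u/2$ and $c_u/2$ by construction, then use Lemma \ref{Max decomposition} to reduce the gain piece to a Gaussian-weighted integral that vanishes as $\tau\to\infty$. Where you differ is in how that last integral is shown to be small. The paper first replaces $\rho_f$ by $a_u$ in the amplitude and $a_\ell$ in the exponent, integrates in $y$, and then runs a three-region decomposition of the $v_1$-axis ($|v_1|<1/\tau$, $1/\tau<|v_1|<\tau$, $|v_1|>\tau$) with a Taylor expansion of $1-e^{-a_\ell/(\tau|v_1|)}$, arriving at the explicit rate $C_{\ell,u}(\ln\tau+1)/\tau$ recorded in \eqref{I}. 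You instead integrate the $y$-variable exactly via the identity $\frac{1}{\tau|v_1|}\int_0^x e^{-\frac{1}{\tau|v_1|}\int_y^x\rho_f}\rho_f(y)\,dy=1-e^{-\frac{1}{\tau|v_1|}\int_0^x\rho_f}$, apply $1-e^{-\beta}\le\min(1,\beta)$, and invoke dominated convergence. This is cleaner and entirely sufficient for the lemma (the threshold still depends only on the data, since the majorizing function does); the trade-off is that it is purely qualitative, whereas the paper's quantitative bound \eqref{I} is reused verbatim in Lemmas \ref{23} and \ref{gl} and in the contraction estimate of Proposition \ref{fixed point 2}, so in the full argument you would either need to upgrade your $\min(1,a_u/(\tau|v_1|))$ bound to an explicit $O(\ln\tau/\tau)$ estimate (which it readily yields by splitting at $|v_1|=a_u/\tau$) or rerun the DCT argument in each of those places.
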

\begin{proof}
We only prove the second one. Consider
\begin{align*}
\int_{\mathbb{R}^3}\Phi^+(f)|v|^2dv&=\int_{v_1>0}e^{-\frac{1}{\tau|v_1|}\int^x_0\rho_f(y)dy}f_{L}(v)|v|^2dv\cr
&+\int_{v_1>0}\int_{0}^{x}\frac{1}{\tau|v_1|}e^{-\frac{\int^x_y\rho_f(z)dz}{\tau|v_1|}}\rho_f(y)\mathcal{M}_{\nu}(f)|v|^2dydv.
\end{align*}
Using $\rho_f\geq a_{\ell} $, we estimate the first term of $\int_{\mathbb{R}^3}\Phi^+|v|^2dv$ as
\begin{eqnarray*}
\int_{v_1>0}e^{-\frac{1}{\tau|v_1|}\int^x_0\rho_f(y)dy}f_{L}(v)|v|^2dv
\leq \int_{v_1>0}e^{-\frac{a_{\ell}x}{\tau|v_1|}}f_{L}(v)|v|^2dv
\leq\int_{v_1>0}f_{L}(v)|v|^2dv.
\end{eqnarray*}
By Lemma \ref{Max decomposition}, we compute
\begin{align*}
&\int_{v_1>0}\int_{0}^{x}\frac{1}{\tau|v_1|}e^{-\frac{\int^x_y\rho_f(z)dz}{\tau|v_1|}}\rho_f(y)\mathcal{M}_{\nu}(f)|v|^2dydv\cr
%&\quad\leq C\left\{\int_{0}^{x}\Big(\int_{v_1>0} \frac{1}{\tau|v_1|}e^{-\frac{\int^x_y\rho_f(z)dz}{\tau|v_1|}}
%\mathcal{M}_1(f)dv_1\Big)\rho_f(y)dy\right\}\cr
&\quad\leq C_{\ell,u}\left\{\int_{0}^{x}\Big(\int_{v_1>0} \frac{1}{\tau|v_1|}e^{-\frac{\int^x_y\rho_f(z)dz}{\tau|v_1|}}
\mathcal{M}_1(v_1)dv_1\Big)\rho_f(y)dy\right\}
\left\{\int_{\mathbb{R}^2}\mathcal{M}_{2}\mathcal{M}_{3}dv_2dv_3\right\}\cr
%&&\leq\int_{0}^{x}\int_{\mathbb{R}} \frac{1}{\tau|v_1|}e^{-\frac{\int^x_y\rho(z)dz}{\tau|v_1|}}\mathcal{M}_1(v_1)dv_1\rho(y)dy\cr
&\quad\leq C_{\ell,u}a_u\int^x_0\int_{v_1>0} \frac{1}{\tau|v_1|}e^{-\frac{\int^x_ya_{\ell}dz}{\tau|v_1|}}\mathcal{M}_1(v_1)dv_1dy\cr
&\quad\leq C_{\ell,u} a_u\int_{0}^{x}\int_{v_1>0} \frac{1}{\tau|v_1|}e^{-\frac{a_{\ell}(x-y)}{\tau|v_1|}}\mathcal{M}_1(v_1)dv_1dy\cr
&\quad\equiv C_{\ell,u}a_uI.
\end{align*}
Then divide the domain of integration into the following two regions
\begin{eqnarray*}
I&=&\left\{\int^x_0\int_{|v_1|<\tau}+\int^x_0\int_{|v_1|>\tau}\right\}\frac{1}{\tau|v_1|}e^{-\frac{a_{\ell}(x-y)}{\tau|v_1|}}\mathcal{M}_1(v_1)dv_1dy\cr
&\equiv&I_1+I_2.
\end{eqnarray*}
For $I_1$, we carry out the integration on $x$ first:
\begin{align*}
I_1&=\int_{|v_1|<\tau}\left\{\int_{0}^x\frac{1}{\tau|v_1|}e^{-\frac{a_{\ell}(x-y)}{\tau|v_1|}}dy\right\}\mathcal{M}_1(v_1)dv_1\cr
%&=&\frac{1}{a_{\ell}}\int_{\frac{1}{\tau}<|v_1|<\tau}e\left\{e^{1-\frac{Cx}{\tau|v_1|}}-e^{-\frac{Cx}{\tau|v_1|}}\right\}\mathcal{M}_1(v_1)dv_1\cr
%&=&\frac{1}{a_{\ell}}\int_{\frac{1}{\tau}<|v_1|<\tau}\left\{1-e^{-\frac{Cx}{\tau|v_1|}}\right\}\mathcal{M}_1(v_1)dv_1\cr
&=\frac{1}{a_{\ell}}\int_{|v_1|<\tau}\left\{1-e^{-\frac{a_{\ell}x}{\tau|v_1|}}\right\}\mathcal{M}_1(v_1)dv_1\cr
&\leq\frac{1}{a_{\ell}}\int_{|v_1|<\tau}\left\{1-e^{-\frac{a_{\ell}}{\tau|v_1|}}\right\}\mathcal{M}_1(v_1)dv_1,
\end{align*}
and decompose the integration further as
\begin{eqnarray*}
\frac{1}{a_{\ell}}\left\{\int_{|v_1|<\frac{1}{\tau}}+\int_{\frac{1}{\tau}<|v_1|<\tau}\right\}\left\{1-e^{-\frac{a_{\ell}}{\tau|v_1|}}\right\}\mathcal{M}_1(v_1)dv_1\equiv I_{11}+I_{12}.
\end{eqnarray*}
We use rough estimates $1-e^{-\frac{a_{\ell}}{\tau|v_1|}}\leq1$ and $\mathcal{M}_1(v_1)\leq 1$ to control $I_{11}$ by
\begin{eqnarray*}
\frac{1}{a_{\ell}}\int_{|v_1|<\frac{1}{\tau}}dv_1
\leq\frac{1}{a_{\ell}}\frac{1}{\tau}.
\end{eqnarray*}
On the other hand, we expand $1-e^{-\frac{a_{\ell}}{\tau|v_1|}}$ by Taylor series and use $\mathcal{M}_1(v_1)\leq 1$ to find
\begin{align*}
I_{12}&=\frac{1}{a_{\ell}}\int_{\frac{1}{\tau}<|v_1|<\tau}
\left\{\left(\frac{a_{\ell}}{\tau|v_1|}\right)-\frac{1}{2!}\left(\frac{a_{\ell}}{\tau|v_1|}\right)^2+\frac{1}{3!}\left(\frac{a_{\ell}}{\tau|v_1|}\right)^3+\cdots\right\}\mathcal{M}_1(v_1)dv_1\cr
&\leq\frac{1}{a_{\ell}}\Big|\int_{\frac{1}{\tau}}^{\tau}\frac{a_{\ell}}{\tau r}\,dr\Big|+\Big|\int_{\frac{1}{\tau}}^{\tau}\frac{1}{2!}\left(\frac{a_{\ell}}{\tau r}\right)^2dr\Big|+
\Big|\int_{\frac{1}{\tau}}^{\tau}\frac{1}{3!}\left(\frac{a_{\ell}}{\tau r}\right)^3dr \Big|+\cdots\cr
%&=&\frac{1}{a_{\ell}}\int_{\frac{1}{\tau}<|v_1|<\tau}e
%\left\{\left(\frac{C}{\tau|v_1|}\right)-\frac{1}{2!}\left(\frac{C}{\tau|v_1|}\right)^2+\frac{1}{3!}\left(\frac{C}{\tau|v_1|}\right)^3+\cdots\right\}\mathcal{M}_1(v_1)dv_1\cr
&=\left\{\frac{1}{\tau}\ln \tau^2+\frac{1}{2!}\frac{a_{\ell}}{\tau^2}\frac{\tau^2-1}{\tau}
+\frac{1}{2\cdot 3!}\frac{a^{2}_{\ell}}{\tau^3}\frac{\tau^4-1}{\tau^2}
+\frac{1}{3\cdot 4!}\frac{a^{3}_{\ell}}{\tau^4}\frac{\tau^6-1}{\tau^3}
\cdots\right\}\cr
&\leq \frac{1}{\tau}\ln \tau^2+\frac{e^{a_{\ell}}}{a_{\ell}}\frac{1}{\tau}.
\end{align*}
In the last line, we used
\begin{align*}
&\frac{1}{2!}\frac{a_{\ell}}{\tau^2}\frac{\tau^2-1}{\tau}
+\frac{1}{2\cdot 3!}\frac{a^{2}_{\ell}}{\tau^3}\frac{\tau^4-1}{\tau^2}+
\frac{1}{3\cdot 4!}\frac{a^{3}_{\ell}}{\tau^4}\frac{\tau^6-1}{\tau^3}
\cdots\cr
&\qquad=\frac{1}{2!}\frac{a_{\ell}}{\tau}\frac{\tau^2-1}{\tau^2}
+\frac{1}{2\cdot 3!}\frac{a^{2}_{\ell}}{\tau}\frac{\tau^4-1}{\tau^4}+
\frac{1}{3\cdot 4!}\frac{a^{3}_{\ell}}{\tau}\frac{\tau^6-1}{\tau^6}
\cdots\cr
&\qquad \leq\frac{1}{a_{\ell}}\left\{\frac{a^2_{\ell}}{2!}+\frac{a^3_{\ell}}{3!}+\frac{a^4_{\ell}}{4!}\cdots\right\}\frac{1}{\tau}\cr
&\qquad\leq\frac{e^{a_{\ell}}}{a_{\ell}}\frac{1}{\tau},
%&\qquad\leq\frac{1}{2!}\left(\frac{a_{\ell}}{\sqrt{\tau}}\right)
%\left(\frac{\tau-1}{\tau}\right)\frac{1}{\tau^{3/2}}
%+\frac{1}{2\cdot 3!}\left(\frac{a_{\ell}}{\sqrt{\tau}}\right)^2\left(\frac{\tau^2-1}{\tau^2}\right)\frac{1}{\tau^{2}}\cr
%&\qquad+\frac{1}{3\cdot %4!}\left(\frac{a_{\ell}}{\sqrt{\tau}}\right)^3\left(\frac{\tau^3-1}{\tau^3}\right)\frac{1}{\tau^{5/2}}
%\cdots\cr
%&\qquad\leq\frac{1}{\tau^{-3/2}}\left\{1+\frac{1}{\tau^{1/2}}++\frac{1}{\tau}++\frac{1}{\tau^{3/2}}+\cdots\right\}\cr
&%\qquad=\frac{\tau^{-3/2}}{1-{\tau}^{-1/2}}\leq \frac{1}{2\tau^{3/2}},
\end{align*}
which again is the consequence of $(\tau^n-1)/\tau^n<1$.
%\[a_{\ell}<\frac{1}{2}e_{u},\quad \frac{\tau^n-1}{\tau^n}<1,\]
%for sufficiently large $\tau$.
Finally, $I_2$ is estimated as follows:
\begin{eqnarray*}
I_2%&=&\int_{|v_1|>\tau}\left\{\int_{0}^x\frac{1}{\tau|v_1|}e^{-\frac{C(x-y)}{\tau|v_1|}}dx\right\}\mathcal{M}_1(v_1)dv_1\cr
&\leq&\int_{|v_1|>\tau}\left\{\int_{0}^1\frac{1}{\tau|v_1|}e^{-\frac{a_{\ell}(x-y)}{\tau|v_1|}}dy\right\}\mathcal{M}_1(v_1)dv_1\cr
&\leq&\int_{|v_1|>\tau}\left\{\int_{0}^1\frac{1}{\tau|v_1|}dy\right\}\mathcal{M}_1(v_1)dv_1\cr
%&\leq&\int_{|v_1|>\tau}\frac{1}{\tau|v_1|}dv_1\cr
%&\leq&\frac{1}{a_{\ell}}\frac{1}{\tau^2}\int_{|v_1|>\tau}\left\{1-e^{-\frac{Cx}{\tau|v_1|}}\right\}\mathcal{M}_1(v_1)dv_1\cr
&\leq&\frac{1}{\tau^2}\int_{|v_1|>\tau}\mathcal{M}_1(v_1)dv_1\cr
&\leq&\frac{1}{\tau^2}\int_{\mathbb{R}^3}\mathcal{M}_1(v_1)dv_1\cr
&\leq& C_{\ell,u}\frac{1}{\tau^2}.
\end{eqnarray*}
The above decomposition of velocity domain is largely motivated from \cite{Gome}. In conclusion, we obtain the following estimate for $I$:
\begin{equation}\label{I}
I\leq C_{\ell,u}\left\{\frac{1}{\tau}\ln \tau^2+\frac{1}{\tau}
+\frac{1}{\tau^2}\right\}\leq C_{\ell,u}\left(\frac{\ln \tau+1}{\tau}\right),
\end{equation}
where $C_{\ell,u}>0$ depends only on $\nu$, quantities in (\ref{quantities}) and $\gamma_{\ell}$.
Finally, we gather these estimates to obtain
\begin{eqnarray*}
\int_{v_1>0}\Phi^+(f)|v|^2dv\leq \int_{v_1>0}f_L(v)|v|^2dv+C_{\ell,u}\left(\frac{\ln \tau+1}{\tau}\right).
\end{eqnarray*}
By an identical argument, similar estimate can be derived for $\Phi^-(f)$:
\begin{eqnarray*}
\int_{v_1<0}\Phi^-(f)dv\leq \int_{v_1<0}f_R(v)|v|^2dv+C_{\ell,u}\left(\frac{\ln \tau+1}{\tau}\right).
\end{eqnarray*}
Summing up these estimates and recalling the definition of $c_u$, we get
\begin{eqnarray*}
\int_{\mathbb{R}^3}\Phi(f)|v|^2dv\leq \frac{1}{2}c_{u}+C_{\ell,u}\left(\frac{\ln \tau+1}{\tau}\right),
\end{eqnarray*}
which gives the second estimate in (\ref{numberr}) for sufficiently large $\tau$.
%\[\int_{\mathbb{R}^3}\Phi(f)|v|^2dv\leq c_u.\]
\end{proof}
\begin{lemma}\label{23}
For $i=2,3$, we have
\[
\Big|\int_{\mathbb{R}^3}\Phi(f)v_idv\Big|\leq C_{\ell,u}\left(\frac{\ln \tau+1}{\tau}\right).
\]
\end{lemma}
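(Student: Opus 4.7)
The plan is to mimic the decomposition used in the proof of Lemma \ref{au}, but exploit the zero transverse-momentum assumption (\ref{0}) to kill the boundary contribution and then invoke Lemma \ref{Max decomposition} to control the Gaussian part.

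First, I would split
\[
\int_{\mathbb{R}^3}\Phi(f)v_i\,dv=\int_{v_1>0}\Phi^+(f)v_i\,dv+\int_{v_1<0}\Phi^-(f)v_i\,dv,
\]
and, in each half-space integral, separate the contribution of the transported boundary datum from that of the source term involving $\mathcal{M}_\nu(f)$. For the $\Phi^+$ piece, the transported boundary term is
\[
\int_{v_1>0}e^{-\frac{1}{\tau|v_1|}\int_0^x\rho_f(y)\,dy}f_L(v)\,v_i\,dv.
\]
The crucial observation is that the damping factor depends on $v$ only through $v_1$, so one may first carry out the $(v_2,v_3)$ integration. By assumption (\ref{0}), $\int_{\mathbb{R}^2}f_L v_i\,dv_2\,dv_3=0$ for $i=2,3$, hence this term vanishes identically in $x$. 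The analogous cancellation holds for the $f_R$ part of $\Phi^-$.

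It then remains to estimate the source contribution
\[
J^\pm:=\int_{\{\pm v_1>0\}}\int_0^x\!\text{(or $\int_x^1$)}\;\frac{1}{\tau|v_1|}\,e^{-\frac{1}{\tau|v_1|}\int \rho_f}\,\rho_f(y)\,\mathcal{M}_\nu(f)(y,v)\,v_i\,dy\,dv.
\]
Here I would use the elementary bound $|v_i|\le 1+|v|^2$ together with Lemma \ref{Max decomposition} to obtain
\[
\mathcal{M}_\nu(f)\,|v_i|\le\mathcal{M}_\nu(f)(1+|v|^2)\le C_{\ell,u}\,\mathcal{M}_1(v_1)\mathcal{M}_2(v_2)\mathcal{M}_3(v_3),
\]
and then estimate $\rho_f\le a_u$ in the amplitude and $\rho_f\ge a_\ell$ in the exponent, so that the $(v_2,v_3)$ integral produces a harmless constant $\int\mathcal{M}_2\mathcal{M}_3\,dv_2\,dv_3$ while the remaining expression reduces to precisely the quantity $I$ bounded in (\ref{I}):
\[
|J^\pm|\le C_{\ell,u}\,a_u\int_0^x\!\int_{v_1>0}\frac{1}{\tau|v_1|}e^{-\frac{a_\ell(x-y)}{\tau|v_1|}}\mathcal{M}_1(v_1)\,dv_1\,dy\le C_{\ell,u}\Big(\frac{\ln\tau+1}{\tau}\Big).
\]
Summing the two half-space contributions yields the claim.

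No genuine obstacle is anticipated: once cancellation (\ref{0}) is invoked, the argument is essentially a copy of the second half of the proof of Lemma \ref{au}. The only point requiring a little care is justifying the factorization of the boundary integral, which works because the characteristic damping exponent is a function of $v_1$ alone and permits Fubini in the $(v_2,v_3)$ variables against the hypothesis on $f_L,f_R$.
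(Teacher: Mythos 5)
Your proposal is correct and follows essentially the same route as the paper: the boundary term is annihilated by the zero transverse-momentum hypothesis (\ref{0}) after integrating out $(v_2,v_3)$ (legitimate since the damping exponent depends only on $v_1$), and the Gaussian source term is reduced via Lemma \ref{Max decomposition} and $\rho_f\in[a_\ell,a_u]$ to the quantity $I$ already bounded in (\ref{I}). The only cosmetic difference is that the paper first integrates $\mathcal{M}_\nu(f)v_2$ over $(v_2,v_3)$ to define $G(x,v_1)$ and then bounds $G\le C_{\ell,u}\mathcal{M}_1(v_1)$, whereas you bound $\mathcal{M}_\nu(f)|v_i|$ pointwise before integrating; these are interchangeable.
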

\begin{proof}
We only prove for $i=2$.
%By  Lemma \ref{Max decomposition} and the Lebesgue differentiation theorem, we have
%\begin{align}\label{+-}
%|v_1|\frac{\partial\Phi_+(f)}{\partial x}&=\frac{\rho}{1-\nu}\big(\mathcal{M}_{\nu}(f)-\Phi_+(f)\big) \quad v_1>0.
%\end{align}
We integrate (\ref{f+}) with respect to $v_2dv_2dv_3$ to get
\begin{eqnarray*}
F_+(x,v_1)=e^{-\frac{1}{\tau |v_1|}\int^x_0\rho(y)dy}F_{L,+}(v_1)
+\frac{1}{\tau|v_1|}\int^x_0e^{-\frac{1}{\tau |v_1|}\int^x_y\rho(z)dz}\rho(y)G(y,v_1)dy.
\end{eqnarray*}
where
\begin{align*}
F_+=\int_{\mathbb{R}^2}\Phi_+(f)v_2dv_2dv_3,~ G=\int_{\mathbb{R}^2}\mathcal{M}_{\nu}(f)v_2dv_2dv_3,
~ F_{L,+}(v_1)=\int_{\mathbb{R}^2}f_L(v)v_2dv_2dv_3.
\end{align*}
By our assumption on $f_L$, we have
\begin{align*}
F_{L,+}(v_1)=0,
\end{align*}
so that
\begin{eqnarray}\label{sothat}
F_+(x,v_1)=\frac{1}{\tau|v_1|}\int^x_0e^{-\frac{1}{\tau |v_1|}\int^x_y\rho(z)dz}\rho(y)G(y,v_1)dy.
\end{eqnarray}
Using Lemma \ref{Max decomposition}, we compute
\begin{align*}
G(x,v_1)
\leq C_{\ell,u} \mathcal{M}_1\Big(\int_{\mathbb{R}^2}\mathcal{M}_2\mathcal{M}_3|v_2|dv_2dv_3\Big)
\leq C_{\ell,u}\mathcal{M}_1(v_1).
\end{align*}
Substituting this into (\ref{sothat}),
\begin{align*}
F_+(x,v_1)&\leq C_{\ell,u}\frac{1}{\tau|v_1|}\int^x_0e^{-\frac{1}{\tau |v_1|}\int^x_y\rho(z)dz}\rho(y)\mathcal{M}_1(v_1)dy\cr
&\leq C_{\ell,u}\frac{a_u}{\tau|v_1|}\int^x_0e^{-\frac{a_{\ell}(x-y)}{\tau |v_1|}}\mathcal{M}_1(v_1)dy\cr
\end{align*}
Now, integrating on $v_1>0$ and recalling (\ref{I}), we get
\begin{align*}
\int_{v_1>0}F_+(x,v_1)dv_1
&=C_{\ell,u}\int_{0}^{x}\int_{v_1>0} \frac{1}{\tau|v_1|}e^{-\frac{a_{\ell}(x-y)}{\tau|v_1|}}\mathcal{M}_1(v_1)dv_1dy\cr
&\leq C_{\ell,u}\left(\frac{\ln \tau+1}{\tau}\right).
\end{align*}
That is,
\begin{align*}
\int_{v_1>0}\Phi_+(f)v_2dv
\leq C_{\ell,u}\left(\frac{\ln \tau+1}{\tau}\right).
\end{align*}
Applying the same type of argument to $\Phi_-(f)$, we can derive
\begin{align*}
\Big|\int_{v_1<0}\Phi_-(f)v_2dv\Big|
\leq C_{\ell,u}\left(\frac{\ln \tau+1}{\tau}\right).
\end{align*}
We sum these up to obtain the desired result.
\end{proof}
\begin{lemma}\label{gl} Let $f\in \Omega$. Then, for sufficiently large $\tau$, we have
\begin{align*}
&\left(\int_{\mathbb{R}^3}\Phi(f)dv\right)\left(\int_{\mathbb{R}^3}\Phi(f)|v|^2dv\right)-\left|\int_{\mathbb{R}^3}\Phi(f)vdv\right|^2\geq \gamma_{\ell}.
\end{align*}
\end{lemma}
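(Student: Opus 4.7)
The plan is to rewrite the left-hand side as a symmetric double integral and then exploit the pointwise boundary lower bound on $\Phi(f)$ already recorded in the proof of Lemma \ref{al}. By expanding $|v-w|^2 = |v|^2 - 2v\cdot w + |w|^2$ and symmetrizing in $v$ and $w$, one has the identity
\[
\left(\int_{\mathbb{R}^3}\Phi(f)dv\right)\left(\int_{\mathbb{R}^3}\Phi(f)|v|^2 dv\right) - \left|\int_{\mathbb{R}^3}\Phi(f)v dv\right|^2 = \frac{1}{2}\int_{\mathbb{R}^3}\int_{\mathbb{R}^3}\Phi(f)(v)\Phi(f)(w)|v-w|^2 dvdw,
\]
which converts the target quantity into a manifestly non-negative expression whose estimation from below requires only pointwise information on $\Phi(f)$.

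Next, I would insert the bound $\Phi(f)\geq e^{-a_u/(\tau|v_1|)}f_{LR}$ into both factors of the double integral. Since $f_{LR}=f_L\mathbf{1}_{v_1>0}+f_R\mathbf{1}_{v_1<0}$ has disjointly supported pieces, I would then restrict the integration to the ``crossing'' region $\{v_1>0,\,w_1<0\}$, using the symmetry $(v,w)\leftrightarrow(w,v)$ to absorb the prefactor $1/2$ and discard the $\{v_1>0,\,w_1>0\}$ and $\{v_1<0,\,w_1<0\}$ pieces (which only add a non-negative quantity). On the remaining region, $|v-w|^2 \geq (v_1-w_1)^2 = (v_1+|w_1|)^2 \geq 4v_1|w_1|$ by AM--GM, and the double integral factorizes, yielding the lower bound
\[
4\left(\int_{v_1>0}e^{-a_u/(\tau v_1)}f_L(v)\,v_1\, dv\right)\left(\int_{w_1<0}e^{-a_u/(\tau|w_1|)}f_R(w)\,|w_1|\, dw\right) > 4\gamma_{\ell},
\]
by hypothesis (\ref{gamma}); in particular this is $\geq \gamma_{\ell}$, as desired.

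I do not anticipate a real obstacle. The only step with any content is the symmetrization identity, which is a standard manipulation in kinetic theory; once it is in place the argument just reuses the boundary-dominated lower bound from Lemma \ref{al}. It is worth noting that this estimate is in fact uniform in $\tau$ (with slack $4\gamma_\ell$ versus the required $\gamma_\ell$), so the ``for sufficiently large $\tau$'' qualifier in the statement of Lemma \ref{gl} is not used at this step; it is presumably retained for uniformity with Lemmas \ref{au} and \ref{23}, where the smallness $(\ln\tau+1)/\tau$ is genuinely needed.
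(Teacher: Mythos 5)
Your proof is correct, and it takes a genuinely different route from the paper's. The paper first applies Cauchy--Schwarz to get $\left(\int\Phi(f)dv\right)\left(\int\Phi(f)|v|^2dv\right)\geq\left(\int\Phi(f)|v_1|dv\right)^2$, then bounds $\left|\int\Phi(f)v\,dv\right|^2$ by the square of the $v_1$-moment plus a remainder $R$ built from the transverse moments $\int\Phi(f)v_i\,dv$ ($i=2,3$); the main term is factored via $a^2-b^2=(a-b)(a+b)$, which yields exactly the same product of half-space integrals you obtain (hence the same $4\gamma_{\ell}$), while $R$ is shown to be of size $(\ln\tau+1)/\tau$ using Lemma \ref{23} and is absorbed by taking $\tau$ large --- this is where the ``sufficiently large $\tau$'' in the statement is genuinely used in the paper's argument. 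Your symmetrization identity
\[
\left(\int\Phi(f)dv\right)\left(\int\Phi(f)|v|^2dv\right)-\left|\int\Phi(f)v\,dv\right|^2
=\frac{1}{2}\iint\Phi(f)(v)\Phi(f)(w)|v-w|^2\,dv\,dw
\]
treats the full vector moment in one stroke, so the transverse components never appear: you need neither Lemma \ref{23} nor hypothesis (\ref{0}) (which the paper invokes only to kill $F_{L,+}$ in the proof of Lemma \ref{23}), and your lower bound $4\gamma_{\ell}$ is uniform in $\tau$, as you note. The two ingredients you use implicitly --- non-negativity of $\Phi(f)$ (to discard the non-crossing regions) and finiteness of its second moments (to justify the identity and Fubini) --- are supplied by Lemmas \ref{geq} and \ref{au}, so there is no gap. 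This is a real simplification; it even suggests that assumption (\ref{0}) could be removed from Theorem \ref{Main}, since Lemma \ref{23} appears to be used nowhere else in the paper.
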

\begin{proof}
Since we have shown $\Phi(f)\geq0$ in Lemma \ref{geq}, we can apply the Cauchy-Schwarz inequality as
\begin{align*}
&\left(\int_{\mathbb{R}^3}\Phi(f)dv\right)\left(\int_{\mathbb{R}^3}\Phi(f)|v|^2dv\right)
-\left|\int_{\mathbb{R}^3}\Phi(f)vdv\right|^2\cr
&\hspace{1cm}\geq\left(\int_{\mathbb{R}^3}\Phi(f)|v|dv\right)^2-\left|\int_{\mathbb{R}^3}\Phi(f)vdv\right|^2\cr
&\hspace{1cm}\geq\left(\int_{\mathbb{R}^3}\Phi(f)|v_1|dv\right)^2-\left|\int_{\mathbb{R}^3}\Phi(f)vdv\right|^2.
\end{align*}
In the last line, we have used $|v|\geq |v_1|$. Then we decompose the last term as
\begin{align*}
&\left(\int_{\mathbb{R}^3}\Phi(f)|v_1|dv\right)^2-\Big|\int_{\mathbb{R}^3}\Phi(f)vdv\Big|^2\cr
&\qquad \geq\left(\int_{\mathbb{R}^3}\Phi(f)|v_1|dv\right)^2-\Big\{\sum_{1\leq i\leq 3}\Big|\int_{\mathbb{R}^3}\Phi(f)v_idv\Big|\Big\}^2\cr
&\qquad=\left(\int_{\mathbb{R}^3}\Phi(f)|v_1|dv\right)^2
-\Big(\int_{\mathbb{R}^3}\Phi(f)v_1dv\Big)^2-R\cr
&\qquad\equiv I-R.
\end{align*}
where
\begin{align*}
R=M^2_2+M^2_3+2M_1M_2+2M_2M_3+2M_3M_1,
\end{align*}
for  $M_i=\Big|\int_{\mathbb{R}^3}\Phi(f)v_idv\Big|$. Since $M_1$ is bounded: $M_1\leq a_u+c_u$, we see from Lemma \ref{23} that $R$ can be made arbitrarily small by
taking $\tau$ sufficiently large:
\[
R\leq C_{\ell,u}\left(\frac{\ln \tau+1}{\tau}\right).
\]
For the estimate of $I$, we use the simple identity: $a^2-b^2=(a-b)(a+b)$ to bound $I$ from below by
\begin{align*}
%&\hspace{1cm}\geq\left(\int_{\mathbb{R}^3}f|v_1|dv\right)^2-\left(\int_{\mathbb{R}^3}fv_1dv\right)^2\cr
I&\geq\left\{\int_{\mathbb{R}^3}\Phi(f)(|v_1|+v_1)dv\right\}\left\{\int_{\mathbb{R}^3}\Phi(f)(|v_1|-v_1)dv\right\}\cr
%&\geq\left\{\int_{\mathbb{R}^3}\Phi(f)(|v_1|+v_1)dv\right\}\left\{\int_{\mathbb{R}^3}\Phi(f)(|v_1|-v_1)dv\right\}\cr
&=4\left\{\int_{v_1>0}\Phi(f)|v_1|dv\right\}\left\{\int_{v_1<0}\Phi(f)|v_1|dv\right\}.
\end{align*}
We then recall from (\ref{f+}) that
\[
\Phi(f)\geq e^{-\frac{1}{\tau|v_1|}\int^x_0\rho_f(y)dy}f_L1_{v_1>0}+e^{-\frac{1}{\tau|v_1|}\int^1_x\rho_f(y)dy}f_R1_{v_1<0}
\]
to obtain
\begin{align*}
&4\left\{\int_{v_1>0}\Phi(f)|v_1|dv\right\}\left\{\int_{v_1<0}\Phi(f)|v_1|dv\right\}\cr
&\hspace{1cm}\geq4\left(\int_{v_1>0}e^{-\frac{1}{\tau|v_1|}\int^x_0\rho_f(y)dy}f_L|v_1|dv\right)
\left(\int_{v_1<0}e^{-\frac{1}{\tau|v_1|}\int^1_x\rho_f(y)dy}f_R|v_1|dv\right)\cr
&\hspace{1cm}\geq4\left(\int_{v_1>0}e^{-\frac{a_{u}}{\tau|v_1|}}f_L|v_1|dv\right)
\left(\int_{v_1<0}e^{-\frac{a_u}{\tau|v_1|}}f_R|v_1|dv\right).
\end{align*}
In view of (\ref{gamma}), we see that the last term is bounded from below by $4\gamma_{\ell}$.
In summary, we have derived the following estimate:
\begin{align*}
&\left(\int_{\mathbb{R}^3}\Phi(f)dv\right)\left(\int_{\mathbb{R}^3}\Phi(f)|v|^2dv\right)
-\left|\int_{\mathbb{R}^3}\Phi(f)vdv\right|^2\cr
&\hspace{1.5cm} \geq4\gamma_{\ell}-C_{\ell,u}\left(\frac{\ln \tau+1}{\tau}\right).
\end{align*}
Therefore, upon choosing sufficiently large $\tau$, we can get  the desired result.
\end{proof}
%%%%%%%%%%%%%%%%%%%%%%%%%%%%%%%%%%%%%%%%%%%%%%%%%%%%%%%%%%%%%%%%%%%%%%%%%%%%%%%%%%%%%%%%%%%%%%%%%%%%%%%%%%%%%%%%%%%%%%%%%%%%%%%%%%%%%%%%%%%%%%%%
% %%%%%%%%%%%%%%%%%%%%%%%%%%%%%%%%%%%%%%%%%%%%%%%%%%%%%%%%%%%%%%%%%%%%%%%%%%%%%%%%%%%%%%%%%%%%%%%%%%%%%%%%%%%%%%%%%%%%%%%%%%%%%%%%%%%%%%%%%%%%%%%
% %
% %
% %
% %
% %%%%%%%%%%%%%%%%%%%%%%%%%%%%%%%%%%%%%%%%%%%%%%%%%%%%%%%%%%%%%%%%%%%%%%%%%%%%%%%%%%%%%%%%%%%%%%%%%%%%%%%%%%%%%%%%%%%%%%%%%%%%%%%%%%%%%%%%%%%%%%%%
%%%%%%%%%%%%%%%%%%%%%%%%%%%%%%%%%%%%%%%%%%%%%%%%%%%%%%%%%%%%%%%%%%%%%%%%%%%%%%%%%%%%%%%%%%%%%%%%%%%%%%%%%%%%%%%%%%%%%%%%%%%%%%%%%%%%%%%%%%%%%%%%%
\section{$\Phi$ is contractive in $\Omega$}
The goal of this section is to show that the solution map $\Phi$ is contractive in $\Omega$. First, we consider the continuity property of the ellipsoidal Gaussian.
\begin{proposition}\label{Lipshitz} Let $f$, $g$ be elements of $\Omega$. Then the non-isotropic Gaussian $\mathcal{M}_{\nu}$ satisfies the following continuity property:
\begin{eqnarray*}
%\sup_{x,v}\big|\mathcal{M}_{\nu}(f)-\mathcal{M}_{\nu}(g)\big|(1+|v|^2)
|\mathcal{M}_{\nu}(f)-\mathcal{M}_{\nu}(g)|\leq C_{\ell,u}\sup_x\|f-g\|_{L^{1}_2}
e^{-C_{\ell,u}|v|^2}.
\end{eqnarray*}
\end{proposition}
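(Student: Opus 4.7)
The plan is to use a convex interpolation. Define $h_\theta = \theta f + (1-\theta)g$ for $\theta\in[0,1]$, with associated macroscopic fields $\rho_\theta,U_\theta,T_\theta,\Theta_\theta,\mathcal{T}_{\nu,\theta}$. Then by the fundamental theorem of calculus
\[
\mathcal{M}_\nu(f)-\mathcal{M}_\nu(g)=\int_0^1 \frac{d}{d\theta}\mathcal{M}_\nu(h_\theta)\,d\theta,
\]
so it suffices to bound the integrand uniformly in $\theta$ by $C_{\ell,u}\sup_x\|f-g\|_{L^1_2}\,e^{-C_{\ell,u}|v|^2}$.

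The first step is to verify that the uniform bounds of Lemma \ref{UT lemma} persist along this path. Conditions $(\mathcal{A})$ and $(\mathcal{B})$ pass through convex combinations trivially, since $\int h\,dv$ and $\int h|v|^2\,dv$ are linear in $h$. For $(\mathcal{C})$, setting $A(h)=(\int h\,dv)(\int h|v|^2\,dv)-|\int hv\,dv|^2$ and expanding $A(h_\theta)$ as a quadratic in $\theta$, the cross term has the form $\theta(1-\theta)[\rho_f E_g+\rho_g E_f-2J_f\cdot J_g]$, which is non-negative by AM-GM combined with the Cauchy-Schwarz bound $|J_h|^2\leq \rho_h E_h$. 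Consequently $A(h_\theta)\geq \theta^2 A(f)+(1-\theta)^2 A(g)\geq \gamma_\ell/2$, and Lemma \ref{UT lemma} applies along the whole path with constants degraded by a harmless factor.

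Next I would view $\mathcal{M}_\nu$ as a smooth function $M(\rho,U,\mathcal{T}_\nu;v)$ and apply the chain rule. Each of $\partial_\rho M$, $\partial_{U_i}M$, $\partial_{(\mathcal{T}_\nu)_{ij}}M$ equals $\mathcal{M}_\nu(h_\theta)$ multiplied by a polynomial in $v-U_\theta$ of degree at most two, with coefficients controlled by $\rho_\theta^{-1}$ and by the eigenvalues of $\mathcal{T}_{\nu,\theta}^{-1}$. Combined with the uniform bounds from step 1 and the argument of Lemma \ref{Max decomposition}, this yields $|\partial M|\leq C_{\ell,u}\,e^{-C_{\ell,u}|v|^2}$. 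On the other hand, $\rho_\theta$, $\rho_\theta U_\theta$, and $\int h_\theta v\otimes v\,dv$ are affine in $\theta$ with derivatives bounded by $\int|f-g|(1+|v|^2)\,dv\leq \sup_x\|f-g\|_{L^1_2}$; the quotient rule applied to $U_\theta,T_\theta,\Theta_\theta$, using the lower bound $\rho_\theta\geq a_\ell$, then gives
\[
\Bigl|\tfrac{d}{d\theta}(\rho_\theta,U_\theta,\mathcal{T}_{\nu,\theta})\Bigr|\leq C_{\ell,u}\sup_x\|f-g\|_{L^1_2}.
\]
Combining the two bounds and integrating over $\theta$ delivers the claim.

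The main obstacle is step 1: without a uniform positive lower bound on $A(h_\theta)$, the eigenvalues of $\mathcal{T}_{\nu,\theta}$ could degenerate along the path, rendering $\mathcal{M}_\nu(h_\theta)$ itself singular and the chain-rule derivatives unbounded. Once the Cauchy-Schwarz/AM-GM identity is in hand this obstacle dissolves; all subsequent estimates are smooth-function calculus on a compact parameter domain.
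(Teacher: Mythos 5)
Your proposal is correct and follows the same basic strategy as the paper: write the difference of Gaussians as an integral of a derivative along a convex path, bound the partial derivatives of $\mathcal{M}_{\nu}$ in $(\rho,U,\mathcal{T}_{\nu})$ by $C_{\ell,u}e^{-C_{\ell,u}|v|^2}$ via Lemmas \ref{UT lemma} and \ref{Max decomposition}, and bound the variation of the macroscopic fields by $\sup_x\|f-g\|_{L^1_2}$. The one genuine difference is the choice of path: the paper interpolates linearly in the macroscopic fields themselves, $(\rho_\theta,U_\theta,\mathcal{T}_\theta)=(1-\theta)(\rho_f,U_f,\mathcal{T}_f)+\theta(\rho_g,U_g,\mathcal{T}_g)$, whereas you interpolate in the distribution function, $h_\theta=\theta f+(1-\theta)g$. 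The paper's choice makes the persistence of the bounds along the path essentially free (convex combinations of scalars in $[a_\ell,a_u]$ stay there, and convex combinations of matrices pinched between multiples of the identity stay pinched), at the price of an asserted but unproved claim that ``all lemmas in the previous sections hold the same.'' Your choice requires the extra verification that condition $(\mathcal{C})$ survives convex combination of densities, which you supply correctly via the quadratic expansion $A(h_\theta)\geq\theta^2A(f)+(1-\theta)^2A(g)\geq\gamma_\ell/2$, using Cauchy--Schwarz and AM--GM on the cross term; in exchange, your interpolated fields are honest macroscopic fields of an element of (a slightly enlarged) $\Omega$, so the earlier lemmas apply verbatim rather than by analogy, and the only additional cost is the quotient-rule bookkeeping for $\dot U_\theta$ and $\dot{\mathcal{T}}_{\nu,\theta}$, which you handle with the lower bound $\rho_\theta\geq a_\ell$. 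Both routes yield the same estimate with the same class of constants.
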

\begin{proof}
%It is enough to show that
%\begin{eqnarray*}
%\|\Psi(f)-\Psi(g)\|_{N,q}\leq C_{T_f,\nu}\|f-g\|_{N,q}.\end{eqnarray*}
(1) We expand $\mathcal{M}_{\nu}(f)-\mathcal{M}_{\nu}(g)$ as
\begin{eqnarray}\label{turnbackto}
\begin{split}
\mathcal{M}_{\nu}(f)-\mathcal{M}_{\nu}(g)
&=(\rho_{f}-\rho_{g})
\int^1_0\frac{\partial\mathcal{M}_{\nu}(\theta)}{\partial \rho}d\theta\cr
&+(U_{f}-U_{g})\int^1_0\frac{\partial\mathcal{M}_{\nu}(\theta)}{\partial U}d\theta\cr
&+(\mathcal{T}_{f}-\mathcal{T}_{g})\int^1_0\frac{\partial\mathcal{M}_{\nu}(\theta)}{\partial \mathcal{T}_{\nu}}d\theta\cr
&\equiv I_1+I_2+I_3,
\end{split}
\end{eqnarray}
where
\begin{eqnarray*}
\frac{\partial\mathcal{M}_{\nu}(\theta)}{\partial X}=\frac{\partial\mathcal{M}_{\nu}}{\partial X}
(\rho_{\theta}, U_{\theta}, \mathcal{T}_{\theta})
\end{eqnarray*}
for $(\rho_{\theta}, U_{\theta}, \mathcal{T}_{\theta})=(1-\theta)\big(\rho_{f}, U_{f}, \mathcal{T}_{f}\big)
+\theta\big(\rho_{g}, U_{g}, \mathcal{T}_{g}\big)$.
Since $(\rho_{\theta}, U_{\theta}, \mathcal{T}_{\theta})$ is a linear combination of macroscopic fields of $f$ and $g$,
all lemmas in the previous sections hold the same. Therefore, instead of restating the corresponding lemmas, we refer to them whenever such estimates are needed for $(\rho_{\theta}, U_{\theta}, \mathcal{T}_{\theta})$.\newline\newline
%%%%%%%%%%%%%%%%%%%%%%%%%%%%%%%%%%%%%%%%%%%%%%%%%%%%%%%%%%%%%%%%%%%%%%%%%%%%%%%%%%%%%%%%%%%%%%%%%%%%%%%%%%%%%%%%%%%%%%55
%
%
%
%
%%%%%%%%%%%%%%%%%%%%%%%%%%%%%%%%%%%%%%%%%%%%%%%%%%%%%%%%%%%%%%%%%%%%%%%%%%%%%%%%%%%%%%%%%%%%%%%%%%%%%%%%%%%%%%%%%%%%%%%%%%%%
\noindent(a) Estimate for $I_1$: Since we have
\begin{eqnarray*}
\frac{\partial\mathcal{M}_{\nu}(\theta)}{\partial\rho}=\frac{1}{\rho_{\theta}}\mathcal{M}_{\nu}(\theta),
%\leq\frac{C_{\nu}}{(2\pi T_{\theta})^{\frac{3}{2}}}e^{-C_{\nu}\frac{|v-U_{\theta}|^2}{T_{\nu\theta}}}.
\end{eqnarray*}
it follows directly from $\rho_{\theta}\geq a_{\ell}$ and Lemma \ref{Max decomposition} that
\begin{eqnarray}
\left|\frac{\partial\mathcal{M}_{\nu}(\theta)}{\partial\rho}\right|\leq  C_{\ell,u}e^{-C_{\ell,u}|v|^2}.
\end{eqnarray}
\noindent(b) Estimate for $I_2$: An explicit computation gives
%\begin{eqnarray*}
%\frac{\partial M}{\partial U}=\frac{1}{2}\Big\{(v-U)\mathcal{T}^{-1}+\mathcal{T}^{-1}(v-U)\Big\}\mathcal{M},
%\end{eqnarray*}
\begin{eqnarray*}
\frac{\partial\mathcal{M}_{\nu}(\theta)}{\partial U}
=-\frac{1}{2}\Big\{(v-U_{\theta})^{\top}\mathcal{T}^{-1}_{\theta}+\mathcal{T}^{-1}_{\theta}(v-U_{\theta})\Big\}
\mathcal{M}_{\nu}(\theta).
\end{eqnarray*}
Let $X=v-U_{\theta}$ and observe
\begin{align*}
|X^{\top}\mathcal{T}_{\theta}^{-1}|&=\sup_{|Y|=1}X^{\top}\{\mathcal{T}_{\theta}\}^{-1}Y\cr
&=\frac{1}{2}\sup_{|Y|=1}\Big\{(X+Y)^{\top}\{\mathcal{T}_{\theta}\}^{-1}(X+Y)-X^{\top}\{\mathcal{T}_{\theta}\}^{-1}X-Y^{\top}\{\mathcal{T}_{\theta}\}^{-1}Y\Big\}\cr
%&\leq\frac{1}{2}\sup_{|Y|=1}\Big\{(X+Y)^{\top}\{\mathcal{T}_{\theta}\}^{-1}(X+Y)+X^{\top}\{\mathcal{T}_{\theta}\}^{-1}X+Y^{\top}\{\mathcal{T}_{\theta}\}^{-1}Y\Big\}\cr
&\leq C\left(\frac{|X+Y|^2+|X|^2+1}{T_{\theta}}\right)\cr
&\leq C\left(\frac{1+|v-U_{\theta}|^2}{T_{\theta}}\right),
\end{align*}
which is, by Lemma \ref{UT lemma},  bounded by  $C_{\ell,u}(1+|v|^2)$. Similarly, we can derive
\begin{align*}
|\{\mathcal{T}_{\theta}\}^{-1}(v-U_{\theta })|\leq C_{\ell,u}(1+|v|^2).
\end{align*}
With these computations and Lemma \ref{UT lemma} and Lemma \ref{Max decomposition}, we have
\begin{eqnarray*}
\Big|\frac{\partial\mathcal{M}_{\nu}(\theta)}{\partial U}\Big|
\leq C_{\ell,u}\mathcal{M}_{\nu}(\theta)(1+|v|^2)\leq  C_{\ell,u}e^{-C_{\ell,u}|v|^2}.
\end{eqnarray*}
\noindent(c) Estimate for $I_3$: We first observe
\begin{eqnarray*}
\frac{\partial \mathcal{M}_{\nu}(\theta)}{\partial \mathcal{T}_{ij}}
=\frac{1}{2}\left[-\frac{1}{\det\mathcal{T}_{\theta}}\frac{\partial\det\mathcal{T}_{\theta}}{\partial\mathcal{T}_{\theta ij}}
+(v-U_{\theta})^{\top}\mathcal{T}_{\theta}^{-1}\left(\frac{\partial\mathcal{T}_{\theta}}{\partial{\mathcal{T}_{ij}}}\right)\mathcal{T}_{\theta}^{-1}(v-U_{\theta})\right]
\mathcal{M}_{\nu}(\theta).
\end{eqnarray*}
Since each entry of $\frac{\partial\mathcal{T}_{\theta}}{\partial{\mathcal{T}_{\theta ij}}}$ is either 1 or 0, we have
\begin{eqnarray}\label{use3}\begin{split}
\qquad\left|(v-U_{\theta})^{\top}\mathcal{T}^{-1}_{\theta}\left(\frac{\partial\mathcal{T}_{\theta}}{\partial{\mathcal{T}_{\theta ij}}}\right)\mathcal{T}^{-1}_{\theta}
(v-U_{\theta})\right|
&\leq \left|(v-U_{\theta})^{\top}\mathcal{T}^{-1}_{\theta}\right|
\left|\mathcal{T}^{-1}_{\theta}(v-U_{\theta})\right|\cr
\qquad&\leq C_{\ell,u}(1+|v|^2).
\end{split}
\end{eqnarray}
Since $\det\mathcal{T}_{\theta}$ is a homogeneous polynomial of entries of $\mathcal{T}_{\theta}$:
\[
\sum_{i,j,k,\ell,m,n} C_{ijk\ell mn}\mathcal{T}_{\theta ij}\mathcal{T}_{\theta k\ell}\mathcal{T}_{\theta mn},
\]
for some constants $C_{ijk\ell mn}$, $\frac{\partial\det\mathcal{T}_{\theta}}{\partial\mathcal{T}_{\theta ij}}$ is written in the following form.
\[
\sum_{i,j,m,n} C_{ijmn}\mathcal{T}_{\theta ij}\mathcal{T}_{\theta mn}
\]
for some constants $C_{ijmn}$. Therefore, in view of Lemma \ref{equiv} and Lemma \ref{UT lemma}, we have
\begin{align*}
\left|\frac{\partial\det\mathcal{T}_{\theta}}{\partial\mathcal{T}_{\theta ij}}\right|\leq CT^2_{\theta}\leq C_{\ell,u}.
\end{align*}
Hence, Lemma \ref{Max decomposition} yields
\[
\Big|\frac{\partial \mathcal{M}_{\nu}(\theta)}{\partial \mathcal{T}_{ij}}\Big|\leq C_{\ell,u}(1+|v|^2)\mathcal{M}_{\nu}(\theta)\leq C_{\ell,u}e^{-C_{\ell,u}|v|^2}.
\]
%Then explicit computation using these observations (\ref{use1}), (\ref{use2}) and %(\ref{use3}) leads to
%\begin{eqnarray*}
%\int_{\mathbb{R}^3}\left|\frac{\partial\mathcal{M}_{\nu}(\theta)}{\partial %\mathcal{T}}\right|(1+|v|^2)dv
%\leq\frac{C_T\rho_{\theta}}{T_{\theta}}(1+T_{\theta}+|U_{\theta}|^2).
%\end{eqnarray*}
Plugging all these estimates into (\ref{turnbackto}) gives
\begin{align}\label{obtain}
\begin{split}
&|\mathcal{M}_{\nu}(f)-\mathcal{M}_{\nu}(g)|\cr
&\qquad\leq C_{\ell,u}\Big\{|\rho_{f}-\rho_{g}|+|U_{f}-U_{g}|
+|\mathcal{T}_{f}-\mathcal{T}_{g}|\Big\} e^{-C_{\ell,u}|v|^2}.
%&&\qquad\leq C_{\ell,u}\Big\{|\rho_{f}-\rho_{g}|+|U_{f}-U_{g}|
%+|\mathcal{T}_{f}-\mathcal{T}_{g}|\Big\}  e^{-C_{\ell,u}|v|^2}.
\end{split}
\end{align}
It remains to estimate the macroscopic fields. The first term is estimated straightforwardly:
\begin{eqnarray*}
|\rho_{f}-\rho_{g}|=\int_{\mathbb{R}^3} |f-g|dv\leq C\sup_x\|f-g\|_{L^1_2}.
\end{eqnarray*}
We divide the second term into two parts and estimate separately as
\begin{eqnarray*}
|U_{f}-U_{g}|%&=&\frac{1}{\rho_f}|\rho_fU_f-\rho_fU_g|\cr
&\leq&\frac{1}{\rho_f}|\rho_fU_f-\rho_gU_g|+\frac{1}{\rho_f}|\rho_f-\rho_g||U_g|\cr
&\leq&\frac{1}{\rho_{f}}\int_{ \mathbb{R}^3}|f-g||v|dv+\frac{|U_g|}{\rho_f}\int_{\mathbb{R}^3}|f-g|dv\cr
&\leq&C_{\ell,u}\sup_x\|f-g\|_{L^{1}_2}.
\end{eqnarray*}
The last term is decomposed similarly:
\begin{eqnarray*}
|\mathcal{T}_{f}-\mathcal{T}_{g}|
\leq\frac{1}{\rho_f}|\rho_f\mathcal{T}_{f}-\rho_g\mathcal{T}_{g}|
+\frac{1}{\rho_f}|\rho_f-\rho_g||\mathcal{T}_{g}|=J_1+J_2,
\end{eqnarray*}
where $J_1$ and $J_2$ are computed as
\begin{align*}
J_1
&\leq\frac{1}{a_{\ell}}\int_{\mathbb{R}^3}|f-g|\left|3^{-1}(1-\nu)|v-U|^2Id+\nu(v-U)\otimes (v-U)\right|dv\cr
&\leq \frac{1}{a_{\ell}}\int_{\mathbb{R}^3}|f-g|(1+|v|^2)dv\cr
&\leq C_{\ell,u}\sup_x\|f-g\|_{L^{1}_2},
\end{align*}
and
\begin{eqnarray*}
J_2\leq C_{\ell,u}\int_{\mathbb{R}^3}|f-g|dv
\leq C_{\ell,u}\sup_x\|f-g\|_{L^1_2}.
\end{eqnarray*}
We now substitute these estimates into (\ref{obtain}) to obtain
\begin{eqnarray*}
|\mathcal{M}_{\nu}(f)-\mathcal{M}_{\nu}(g)|\leq C_{\ell,u}\sup_x\|f-g\|_{L^{1}_2}e^{-C_{\ell,u}|v|^2}.
\end{eqnarray*}
\end{proof}

%%%%%%%%%%%%%%%%%%%%%%%%%%%%%%%%%%%%%%%%%%%%%%%%%%%%%%%%%%%%%%%%%%%%%%%%%%%%%%%%%%%%%%%%%%%%%%%%%%%%%%%%%%%%%%%%%%%%%%%%%%%%%%
% %%%%%%%%%%%%%%%%%%%%%%%%%%%%%%%%%%%%%%%%%%%%%%%%%%%%%%%%%%%%%%%%%%%%%%%%%%%%%%%%%%%%%%%%%%%%%%%%%%%%%%%%%%%%%%%%%%%%%%%%%%%%
% %
% %
% %                          Contractive
% %
% %
% %%%%%%%%%%%%%%%%%%%%%%%%%%%%%%%%%%%%%%%%%%%%%%%%%%%%%%%%%%%%%%%%%%%%%%%%%%%%%%%%%%%%%%%%%%%%%%%%%%%%%%%%%%%%%%%%%%%%%%%%%%%%%
%%%%%%%%%%%%%%%%%%%%%%%%%%%%%%%%%%%%%%%%%%%%%%%%%%%%%%%%%%%%%%%%%%%%%%%%%%%%%%%%%%%%%%%%%%%%%%%%%%%%%%%%%%%%%%%%%%%%%%%%%%%%%%%
\begin{proposition}\label{fixed point 2}
Suppose $f,g\in\Omega$. Then, under the assumption of Theorem 2.2, $\Phi$ satisfies
\[
\sup_{x\in[0,1]}\|\Phi(f)-\Phi(g)\|_{L^1_2}\leq \alpha\sup_{x\in[0,1]}\|f-g\|_{L^1_2}
\]
for some constant $\alpha<1$ depending on the quantities in (\ref{quantities}), $\gamma_{\ell}$, $\nu$ and $\kappa$.
\end{proposition}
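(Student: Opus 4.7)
The plan is to decompose $\Phi^+(f)(x,v) - \Phi^+(g)(x,v)$ (and similarly for $\Phi^-$) as a telescoping sum of four differences,
\begin{align*}
\Phi^+(f) - \Phi^+(g) &= \left(e^{-\frac{1}{\tau|v_1|}\int^x_0 \rho_f\,dy} - e^{-\frac{1}{\tau|v_1|}\int^x_0 \rho_g\,dy}\right) f_L \\
&\quad + \frac{1}{\tau|v_1|}\int_0^x \left(e^{-\frac{1}{\tau|v_1|}\int^x_y \rho_f\,dz} - e^{-\frac{1}{\tau|v_1|}\int^x_y \rho_g\,dz}\right) \rho_f \mathcal{M}_\nu(f)\,dy \\
&\quad + \frac{1}{\tau|v_1|}\int_0^x e^{-\frac{1}{\tau|v_1|}\int^x_y \rho_g\,dz}(\rho_f - \rho_g)\mathcal{M}_\nu(f)\,dy \\
&\quad + \frac{1}{\tau|v_1|}\int_0^x e^{-\frac{1}{\tau|v_1|}\int^x_y \rho_g\,dz} \rho_g\bigl(\mathcal{M}_\nu(f) - \mathcal{M}_\nu(g)\bigr)dy \\
&\equiv A_1 + A_2 + A_3 + A_4
\end{align*}
and to bound each piece in $L^1_2(v)$, uniformly in $x$, by $C_{\ell,u}\,g(\tau)\sup_x\|f-g\|_{L^1_2}$ with $g(\tau)\to 0$ as $\tau\to\infty$. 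The elementary inequality $|e^{-a}-e^{-b}|\leq |a-b|$ for $a,b\geq 0$, together with the bound $|\rho_f(y)-\rho_g(y)|\leq \sup_x\|f-g\|_{L^1_2}$, will be the starting point for every exponential difference.

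For $A_1$, the pointwise bound $|A_1|\leq (x/\tau|v_1|)f_L(v)\sup_x\|f-g\|_{L^1_2}$ reduces the $L^1_2$-estimate to controlling $\int_{v_1>0} f_L(v)(1+|v|^2)/|v_1|\,dv$, which is finite exactly by the non-concentration assumption (\ref{satisfies}); this is precisely where the hypothesis on $f_{LR}/|v_1|$ is used, and it yields a contribution of order $1/\tau$. For $A_4$, Proposition \ref{Lipshitz} extracts the factor $\sup_x\|f-g\|_{L^1_2}\,e^{-C_{\ell,u}|v|^2}$ from $\mathcal{M}_\nu(f)-\mathcal{M}_\nu(g)$; after factoring out the trivial $v_2,v_3$-integration, the remaining $v_1,y$-integral is exactly the quantity $I$ estimated in Lemma \ref{au}, producing a bound of $C_{\ell,u}(\ln\tau+1)/\tau$. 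The term $A_3$ is handled in the same way: the pointwise estimate $\mathcal{M}_\nu(f)(1+|v|^2)\leq C_{\ell,u}e^{-C_{\ell,u}|v|^2}$ from Lemma \ref{Max decomposition} again reduces its estimate to the integral $I$.

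The only piece that requires an extra maneuver is $A_2$. The mean value theorem and the lower bound $\rho_f\geq a_\ell$ yield
\[
\left|e^{-\frac{1}{\tau|v_1|}\int^x_y \rho_f\,dz}-e^{-\frac{1}{\tau|v_1|}\int^x_y \rho_g\,dz}\right| \leq \frac{x-y}{\tau|v_1|}\,e^{-\frac{a_\ell(x-y)}{\tau|v_1|}}\sup_x\|f-g\|_{L^1_2},
\]
so that an extra factor $(x-y)/\tau|v_1|$ appears beyond what was present in $A_3$. I would absorb it with the elementary inequality $se^{-a_\ell s}\leq (2/ea_\ell)e^{-a_\ell s/2}$ (applied with $s=(x-y)/\tau|v_1|$), which converts the integrand back to the same shape as in $A_3$ with $a_\ell$ replaced by $a_\ell/2$; the velocity-domain decomposition $|v_1|<\tau$ vs. $|v_1|>\tau$ from Lemma \ref{au} then produces the bound $C_{\ell,u}(\ln\tau+1)/\tau\cdot\sup_x\|f-g\|_{L^1_2}$ once more.

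An identical argument for $\Phi^-$ completes the estimate: summing the eight contributions gives $\sup_x\|\Phi(f)-\Phi(g)\|_{L^1_2}\leq C_{\ell,u}\bigl((\ln\tau+1)/\tau\bigr)\sup_x\|f-g\|_{L^1_2}$, and a contraction constant $\alpha<1$ is obtained by taking $\tau>K$ sufficiently large. I expect the main obstacle to be precisely $A_2$: without the $se^{-a_\ell s}\lesssim e^{-a_\ell s/2}$ trick the integrand is only $O(1)$ (not $o(1)$) near $v_1=0$, and only by combining this trick with the velocity-splitting of Lemma \ref{au} (itself adapted from \cite{Gome}) does one recover the required smallness in $\tau$.
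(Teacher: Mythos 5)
Your proposal is correct and follows essentially the same route as the paper: the same four-term telescoping decomposition (the paper writes it as $I(f)-I(g)$ plus $II_1+II_2+II_3$), the same use of $a_s,c_s$ from the non-concentration hypothesis for the boundary term, the same reduction of the three interior terms to the integral $I$ of Lemma \ref{au}, and the same $se^{-a_\ell s}\lesssim e^{-a_\ell s/2}$ device for the term with the exponential difference. No substantive differences to report.
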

\begin{proof}
We first consider $\Phi^+(f)$. We write
\[
\Phi^+(f)=I(f)+II(f,f,f),
\]
where $I(f)$ and $II(f,g,h)$ are defined by
\begin{eqnarray*}
I(f)=e^{-\frac{1}{\tau|v_1|}\int^x_0\rho_f(y)dy}f_L(v),
\end{eqnarray*}
and
\begin{eqnarray*}
II(f,g,h)=\frac{1}{\tau|v_1|}\int^x_0e^{-\frac{1}{\tau|v_1|}\int^x_y\rho_f(z)dz}\rho_g(y)\mathcal{M}_{\nu}(h)dy.
\end{eqnarray*}
{\bf(i)} The estimate for $I(f)-I(g)$: Consider
\[
I(f)-I(g)=\Big\{e^{-\frac{1}{\tau|v_1|}\int^x_0\rho_f(y)dy}
-e^{-\frac{1}{\tau|v_1|}\int^x_0\rho_g(y)dy}\Big\}f_L(v).
\]
By the mean value theorem, there exists $0<\theta<1$ such that
\begin{eqnarray*}
&&e^{-\frac{1}{\tau|v_1|}\int^x_0\rho_f(y)dy}
-e^{-\frac{1}{\tau|v_1|}\int^x_0\rho_g(y)dy}\cr
&&\qquad=-e^{-\frac{1}{\tau|v_1|}\int^x_0(1-\theta)\rho_f(y)+\theta\rho_g(y)dy}
\left\{\frac{1}{\tau|v_1|}\int^x_0\rho_f(y)-\rho_g(y)dy\right\}.
\end{eqnarray*}
Then, since
\begin{align*}
|\rho_f(y)-\rho_g(y)|\leq \sup_{x\in[0,1]}\|f-g\|_{L^1_2},
\end{align*}
and $\rho_f,\,\rho_g\geq a_{\ell}$, we have
\begin{align*}
&\left|\,e^{-\frac{1}{\tau|v_1|}\int^x_0\rho_f(y)dy}
-e^{-\frac{1}{\tau|v_1|}\int^x_0\rho_g(y)dy}\right|\cr
&\qquad\leq e^{-\frac{1}{\tau|v_1|}\int^x_0(1-\theta)\rho_f(y)+\theta\rho_g(y)dy}
\frac{1}{\tau|v_1|}\int^x_0|\rho_f(y)-\rho_g(y)|dy\cr
&\qquad\leq e^{-\frac{1}{\tau|v_1|}\int^x_0(1-\theta)\rho_f+\theta \rho_gdy}
\left\{\frac{x}{\tau|v_1|}\sup_{x\in[0,1]}\|f-g\|_{L^1_2}\right\}\cr
&\qquad\leq\frac{1}{\tau|v_1|}e^{-\frac{1}{\tau|v_1|}\int^x_0(1-\theta)a_{\ell}+\theta a_{\ell}dy}
\sup_{x\in[0,1]}\|f-g\|_{L^1_2}\cr
&\qquad=\frac{1}{\tau|v_1|}e^{-\frac{a_{\ell}x}{\tau|v_1|}}
\sup_{x}\|f-g\|_{L^1_2}.
\end{align*}
Using this, we integrate
\begin{align*}
&\int_{\mathbb{R}^3}|I(f)-I(g)|(1+|v|^2)dv\cr
%&\leq&\int^d_0\int_{\mathbb{R}^3}\left|e^{-\frac{1}{\tau|v_1|}\int^x_0\rho_f(y)dy}
%-e^{-\frac{1}{\tau|v_1|}\int^x_0\rho_g(y)dy}\right| |f_L(v)|dvdx\cr
%&\leq&\int^d_0\int_{\mathbb{R}^3}\frac{1}{\tau|v_1|}e^{-\frac{1}{\tau|v_1|}\int^x_0(1-\theta)a_{\ell}+\theta a_{\ell}dy}\left\{\int^x_0|\rho_g(y)-g_g(y)|dy\right\} |f_L(v)|dxdv\cr
&\quad\leq \int_{v_1>0}\Big|e^{-\frac{1}{\tau|v_1|}\int^x_0\rho_f(y)dy}
-e^{-\frac{1}{\tau|v_1|}\int^x_0\rho_g(y)dy}\Big|f_L(v)(1+|v|^2)dv\cr%\sup_{x}\|f-g\|_{L^1_2}\cr
&\quad\leq\left\{\int_{v_1>0}\frac{1}{\tau|v_1|}e^{-\frac{a_{\ell}x}{\tau|v_1|}}
f_L(v)(1+|v|^2)dv\right\}\sup_{x}\|f-g\|_{L^1_2}\cr
&\quad\leq\frac{1}{\tau}\left(a_s+c_s\right)\sup_{x}\|f-g\|_{L^1_2}.
%&\equiv\frac{1}{\tau}c_{u,1}\sup_{x}\|f-g\|_{L^1_2}.
\end{align*}
Taking supreme in $x$, we have
\[
\sup_{x}\|I(f)-I(g)\|_{L^1_2}\leq \frac{1}{\tau}(a_s+c_s)\sup_{x}\|f-g\|_{L^1_2}.
\]
{\bf(ii)} The estimate for $II(f)-II(g)$: We divide it into three parts as
\begin{eqnarray*}
&&II(f,f,f)-II(g,g,g)\cr
&&\quad=\big\{II(f,f,f)-II(g,f,f)\big\}+\big\{II(g,f,f)-II(g,g,f)\big\}\cr
&&\quad+\big\{II(g,g,f)-II(g,g,g)\big\}\cr
&&\quad=II_1+II_2+II_3.
\end{eqnarray*}
By a similar manner as for $I(f)$, we first compute
\begin{eqnarray*}
&&\Big|\,e^{-\frac{1}{\tau|v_1|}\int^x_y\rho_f(z)dz}-e^{-\frac{1}{\tau|v_1|}\int^x_y\rho_g(z)dz}\Big|\cr
&&\hspace{1.5cm}\leq e^{-\frac{1}{\tau|v_1|}\int^x_y(1-\theta)\rho_f(z)+\theta\rho_g(z)dz}
\frac{1}{\tau|v_1|}\int^x_y|\rho_g(z)-\rho_g(z)|dz\cr
&&\hspace{1.5cm}\leq e^{-\frac{1}{\tau|v_1|}\int^x_y(1-\theta)a_{\ell}+\theta a_{\ell}dz}
\left\{\int^x_y\frac{1}{\tau|v_1|}dz\right\}\|\rho_f-\rho_g\|_{L^{\infty}_x}\cr
&&\hspace{1.5cm}\leq\frac{x-y}{\tau|v_1|}e^{-\frac{a_{\ell }(x-y)}{ \tau|v_1|}}
\sup_{x}\|f-g\|_{L^1_2}\cr
%&&\hspace{1.5cm}=\frac{2}{a_{\ell}}\Big\{\frac{a_{\ell}(x-y)}{2\tau|v_1|}e^{-\frac{a_{\ell }(x-y)}{ 2\tau|v_1|}}\Big\}e^{-\frac{a_{\ell }(x-y)}{ 2\tau|v_1|}}
%\sup_{x}\|f-g\|_{L^1_2}\cr
&&\hspace{1.5cm}\leq\frac{C}{a_{\ell}}e^{-\frac{a_{\ell }(x-y)}{ 2\tau|v_1|}}
\sup_{x}\|f-g\|_{L^1_2},
%&&\hspace{1.5cm}=C_q\frac{x-y}{(1-\nu)\tau|v_1|}e^{-\frac{C_{\varepsilon }(x-y)}{(1-\nu) \tau|v_1|}}
%\|f-g\|_{L^{\infty}_q}\langle v\rangle^{-q}.
\end{eqnarray*}
where we used the uniform boundedness of $xe^{-x}$ $(x>0)$. With this, (\ref{I}) and Lemma \ref{Max decomposition}, we bound $\int_{\mathbb{R}^3}|II_1|(1+|v|^2)dv$ by
\begin{eqnarray*}
&&\int_{\mathbb{R}^3}|II_1|(1+|v|^2)dv\cr
&&\qquad\leq\int_{\mathbb{R}^3}\int^x_0\frac{1}{\tau|v_1|}\Big|~e^{-\frac{1}{\tau|v_1|}\int^x_y\rho_f(z)dz}
-e^{-\frac{1}{\tau|v_1|}\int^x_y\rho_g(z)dz}\Big|~\rho_f(y)\mathcal{M}_{\nu}(f)(1+|v|^2)dydv\cr
%&&\qquad \leq\frac{a_u}{a_{\ell}}\left\{\int_{v_1>0}\frac{1}{\tau|v_1|}\int^x_0e^{-\frac{a_{\ell}(x-y)}{2\tau|v_1|}}
%\mathcal{M}_1(v_1)dydv_1\right\}\sup_{x}\|f-g\|_{L^1_2}\cr
&&\qquad=  \frac{a_u}{a_{\ell}}
\left\{\int_{v_1>0}\int^x_0\frac{1}{\tau|v_1|}e^{-\frac{a_{\ell}(x-y)}{2\tau |v_1|}}
\mathcal{M}_1(v_1)dydv_1\right\}\sup_{x}\|f-g\|_{L^1_2}\cr
&&\qquad\leq C_{\ell,u}\left(\frac{\ln \tau+1}{\tau}\right)\sup_{x}\|f-g\|_{L^1_2}.
%&&\qquad\leq C_{\tau} \frac{a_u}{a_{\ell}}\sup_{x}\|f-g\|_{L^1_2},
\end{eqnarray*}
We can treat $II_2$ similarly:
\begin{eqnarray*}
&&\hspace{-1cm}\int_{\mathbb{R}^3}|II_2|(1+|v|^2)dv\cr
&&=\int_{\mathbb{R}^3}\frac{1}{\tau|v_1|}\int^x_0e^{-\frac{1}{\tau|v_1|}\int^x_y\rho_g(z)dz}| \rho_f(y)-\rho_g(y)|\mathcal{M}_{\nu}(f)(1+|v|^2)dydv\cr
&&\leq C_{\ell,u}\int^x_0\left\{\int_{\mathbb{R}^3}\frac{1}{\tau|v_1|}e^{-\frac{1}{\tau|v_1|}\int^x_y\rho_g(z)dz}\mathcal{M}(f)(1+|v|^2)dv\right\}
|\rho_f(y)-\rho_g(y)|dy\cr
%&&\leq Ca_u\int^x_0\left\{\int_{v_1>0}\frac{1}{\tau|v_1|}\int^x_0e^{-\frac{1}{\tau|v_1|}\int^x_y\rho(z)dz}\mathcal{M}_1(v_1)dv_1\right\}dy
%\sup\|f-g\|_{L^1_v}\cr
&&\leq C_{\ell,u}\left\{\int^x_0\int_{v_1>0}\frac{1}{\tau|v_1|}e^{-\frac{1}{\tau|v_1|}\int^x_y\rho_g(z)dz}\mathcal{M}_1(v_1)dv_1dy\right\}
\sup_{x}\|f-g\|_{L^1_2}\cr
&&\leq C_{\ell,u}\left\{\int^x_0\int_{v_1>0}\frac{1}{\tau|v_1|}e^{-\frac{a_{\ell}(x-y)}{\tau|v_1|}}\mathcal{M}_1(v_1)dv_1dy\right\}
\sup_{x}\|f-g\|_{L^1_2}\cr
&&\leq C_{\ell,u}\left(\frac{\ln \tau+1}{\tau}\right)\sup_{x}\|f-g\|_{L^1_2}.
\end{eqnarray*}
For the estimate of $II_3$, we use Proposition \ref{Lipshitz} as
\begin{align*}
&\hspace{-0.5cm}\int_{\mathbb{R}^3}|II_3|(1+|v|^2)dv\cr
&=\int_{\mathbb{R}^3}\frac{1}{\tau|v_1|}\int^x_0e^{-\frac{1}{\tau|v_1|}\int^x_y\rho(z)dz}\rho_g(y)| \mathcal{M}_{\nu}(f)-\mathcal{M}_{\nu}(g)|(1+|v|^2)dydv\cr
%&\leq\int_{\mathbb{R}^3}\frac{1}{\tau|v_1|}\int^x_0e^{-\frac{1}{\tau|v_1|}\int^x_y\rho(z)dz}\rho_g(y)\| \mathcal{M}_{\nu}(f)-\mathcal{M}_{\nu}(g)\|_{L^{\infty}_2}dydv\cr
%&&\leq\int^x_0\left\{\int_{\mathbb{R}^3}\frac{1}{\tau|v_1|}e^{-\frac{1}{\tau|v_1|}\int^x_y\rho(z)dz}\mathcal{M}_{\nu}(f)dv\right\}
%\rho_f(y)|\mathcal{M}_{\nu}(f)-\mathcal{M}_{\nu}(g)|dy\cr
%&&\leq Ca_u\int^x_0\left\{\int_{v_1>0}\frac{1}{\tau|v_1|}\int^x_0e^{-\frac{1}{\tau|v_1|}\int^x_y\rho(z)dz}\mathcal{M}_1(v_1)dv_1\right\}dy
%\sup\|f-g\|_{L^1_v}\cr
&\leq a_uC_{\ell,u}\left\{\int_{v_1>0}\frac{1}{\tau|v_1|}\int^x_0e^{-\frac{1}{\tau|v_1|}\int^x_y\rho(z)dz}e^{-C_{\ell,u}|v|^2}(1+|v|^2)dvdy\right\}
\sup_{x}\|f-g\|_{L^1_2}\cr
&\leq C_{\ell,u} \left\{\int^x_0\int_{v_1>0}\frac{1}{\tau|v_1|}e^{-\frac{a_{\ell}(x-y)}{\tau|v_1|}}
e^{-C_{\ell,u}|v|^2}dv_1dy\right\}
\sup_{x}\|f-g\|_{L^1_2}.
\end{align*}
Therefore, in view of (\ref{I}), we have
\begin{align*}
II_3\leq C_{\ell,u}\left(\frac{\ln \tau+1}{\tau}\right)\sup_{x}\|f-g\|_{L^1_2}.
\end{align*}
We now gather all these estimates to obtain
\begin{eqnarray*}
\sup_{x}\|\Phi^+(f)-\Phi^+(g)\|_{L^1_2}
\leq C_{\ell,u}\left\{\frac{1}{\tau}(a_s+c_s) +C_{\tau}\right\}\sup_{x}\|f-g\|_{L^1_2},
\end{eqnarray*}
where
\[
C_{\tau}=\frac{\ln \tau+1}{\tau}.
\]
In a similar fashion, we can derive the corresponding estimate for $\Phi^-(f)$:
\begin{eqnarray*}
\sup_{x}\|\Phi^-(f)-\Phi^-(g)\|_{L^1_2}
\leq C_{\ell,u}\left\{\frac{1}{\tau}(a_s+c_s) +C_{\tau}\right\}\sup_{x}\|f-g\|_{L^1_2}.
\end{eqnarray*}
Therefore, we conclude that
\begin{align*}
\sup_{x}\|\Phi(f)-\Phi(g)\|_{L^1_2}
%&\leq C_{\ell,u}\left\{\frac{1}{\tau}\big(C_{L}+C_R\big) +C_{\tau}\right\}\sup_{x}\|f-g\|_{L^1_2}\cr
\leq C_{\ell,u}\left\{\frac{1}{\tau}\big(a_s+c_s\big) +C_{\tau}\right\}\sup_{x}\|f-g\|_{L^1_2}.
%&\leq\alpha\sup_{x}\|f-g\|_{L^1_2}
\end{align*}
This gives the desired result for sufficiently large $\tau>0$.
\end{proof}
\begin{section}{Acknowledgement}
%\noindent{\bf Acknowledgement}
The work of S.-B. Yun was supported by Basic Science Research Program through
the National Research Foundation of Korea (NRF) funded by the Ministry of Science, ICT $\&$
Future Planning (NRF-2014R1A1A1006432)
\end{section}
\bibliographystyle{amsplain}

\begin{thebibliography}{10}
\bibitem{ABLP} Andries, P., Bourgat, J.-F.,  Le Tallec, P., Perthame, B.: Numerical comparison between the Boltzmann and
ES-BGK models for rarefied gases. Comput. Methods Appl. Mech. Engrg. {\bf 191} (2002), no. 31, 3369-3390.
\bibitem{ALPP} Andries,Pierre, Le Tallec, P., Perlat, J.-P., Perthame, B.: The Gaussian-BGK model of Boltzmann equation
with small Prandtl number. Eur. J. Mech. B Fluids {\bf 19} (2000), no. 6, 813-830.
\bibitem{ACI} Arkeryd, L., Cercignani, C., Illner, R.:  Measure solutions of the steady Boltzmann equation in a slab. Comm. Math. Phys. {\bf142} (1991), no. 2, 285–296.
\bibitem{AN1} Arkeryd, L.; Nouri, A. A compactness result related to the stationary Boltzmann equation in a slab, with applications to the existence theory. Indiana Univ. Math. J. {\bf44} (1995), no. 3, 815–839.
\bibitem{AN2} Arkeryd, L., Nouri, A.:  $L^1$ solutions to the stationary Boltzmann equation in a slab. Ann. Fac. Sci. Toulouse Math.
(6) {\bf9} (2000), no. 3, 375–413.
\bibitem{AN3} Arkeryd, L., Nouri, A.: The stationary Boltzmann equation in the slab with given weighted mass for hard and soft forces. Ann. Scuola Norm. Sup. Pisa Cl. Sci. (4) {\bf27} (1998), no. 3-4, 533–556 (1999).
%\bibitem{B-G-L1} Bardos, C., Golse, F., Levermore, D.: Fluid dynamic limites of kinetic equations. I. Formal derivations,
%J. Stat. Phys. {\bf 63} (1991), no. 1-2, 323-344.
%\bibitem{B-G-L} Bardos, C., Golse, F., Levermore, D.: Fluid dynamic limites of kinetic equations. II. Convergence proofs for the Boltzmann equation,
%Comm. Pure. Appl. Math  {\bf 46} (1993), 667-753.
%\bibitem{A-G} Alonso, R.,  Gamba, I.M.: Propagation of $L^1$ and $L^{\infty}$ Maxweliian weighted bounds for derivatives of solutions
%to solutions to the homogeneous elastic Boltzmann equation. J. Math. Pures. Appl. {\bf 89}, no.6, 575-595 (2007)
%\bibitem{Ar1} Arkeryd, L.: $L^{\infty}$ estimates for the space-homogeneous Boltzmann equation, {\bf 31} no.2 347-361 (1983)
%\bibitem{Ar2} Arkeryd, L.: On the Boltzmann equation, Arch. Rational. Mech. Anal. {\bf 45} 1-34 (1972)
%\bibitem{A-E-P} Arkeryd, L. Esposito, R., Pulvirenti, M.: The Boltzmann equation for weakly inhomogeneous data. Comm. Math. Phys. {\bf 111} (3), 393-407 (1987)
%\bibitem{Bello} Bellouquid, A.: Global existence and large-time behavior for BGK model for a gas with non-constant cross section, Transport Theory Statist. Phys. {\bf 32 } (2003) no. 2, 157-185.
\bibitem{BGK} Bhatnagar, P. L., Gross, E. P. and Krook, M.: A model for collision processes in gases. Small amplitude process in charged
and neutral one-component systems, Physical Revies, {\bf 94} (1954), 511-525.
%\bibitem{B-P} Bouchut, F., Perthame, B.: A BGK model for small Prandtl number in the Navier-Stokes approximation.J. Stat. Phys. {\bf 71} (1993), no. 1-2, 191-207.
\bibitem{B1} Brull, S.: The stationary Boltzmann equation for a two-component gas for soft forces in the slab. Math. Methods Appl. Sci. {bf31} (2008), no. 14, 1653–1666. 35F30 (76P05 82C40)
\bibitem{B2} Brull, S.:  The stationary Boltzmann equation for a two-component gas in the slab with different molecular masses. Adv. Differential Equations {\bf15} (2010), no. 11-12, 1103–1124.
\bibitem{B3} Brull, S., Schneider, J.: A new approach for the ellipsoidal statistical model. Contin. Mech. Thermodyn. {\bf20} (2008), no.2, 63-74.
%\bibitem{B2} Brull, S., Schneider, J.: Derivation of a BGK model for reacting gas mixtures. Commun. Math. Sci. {\bf 12} (2014), 117-125.
%\bibitem{Car}Carleman, T.: Problems mathematiques dans la theorie cineqitue des gas Uppsala: Almquist \& Wiksells 1957
\bibitem{C} Cercignani, C.: The Boltzmann Equation and Its Application. Springer-Verlag, 1988.
\bibitem{CIS} Cercignani, C., Illner, R., Shinbrot, M. : A boundary value problem for discrete-velocity models. Duke Math. J. 55 (1987), no. 4, 889–900.
\bibitem{CC} Chapman, C. and Cowling, T. G.: The mathematical theory of non-uniform gases, Cambridge University Press, 1970.
\bibitem{CIP} Cercignani, C., Illner, R., Pulvirenti, M.: The Mathematical Theory of Dilute Gases. Springer-Verlag, 1994.
%\bibitem{Chan} Chan, W. M.: An energy method for the BGK model. M. Phil thesis, City University of Hong Kong, 2007.
%\bibitem{Duan} Duan, R.: hypocoercivity of the linearized dissipative kinetic equations, Nonlinearity, {\bf 24} (2011), no. 8, 2165-2189
%\bibitem{Duan-StrainV} Duan, R, Strain, R.: Optimal large-time behavior of the Vlasov-Maxwell-Boltzmann system in the whole space. Comm. Pure. Appl. Math. {\bf 64} (2011),
no2, 375-413.
%\bibitem{Duan-STrainM} Duan, R, Strain, R.: Optimal time decay of the Vlasov-Poisson-Boltzmann system in $\mathbb{R}^3$. Arch. Rational. Mech. Anal. {\bf 199}(2010), no.1, 291-328.
%\bibitem{D-U-Y} Duan, R., Ukai, S., Yang, Tong; Zhao, H.: Optimal decay estimates on the linearized Boltzmann equation with time dependent force and their applications.Comm. Math. Phys. {\bf277} (2008), no. 1, 189–236.
%\bibitem{D-L1} Diperna, R., Lions, P.-L.: On the Cauchy problem for the Boltzmann equation: Global exitence and weak stability. Ann. Math.
%{\bf130} (2), 321-366 (1989).
%\bibitem{G-P-V} Gamba, I.M., Panferov, V., Villani, C : Upper Maxwellian Bounds for the spatially homogeneous Boltzmann equation.
%Arch. Rational Mech. Anal. {\bf 194} (2009) 253-282.
\bibitem{F-J} Filbet, F., Jin, S.: An asymptotic preserving scheme for the ES-BGK model of the Boltzmann equation. J. Sci. Comput. {\bf 46} (2011), no.2, 204-224.
%\bibitem{F-R} Filbet, F., Russo, G.: Semilagrangian schemes applied to moving boundary problems for the BGK model of rarefied gas dynamics, Kinet. Relat.
%Models {\bf 2} (2009), no.1, 231-250.
\bibitem{G-T} Galli, M.A., Torczynski, R.: Investigation of the ellipsoidal-statistical Bhatnagar-Gross-Krook kinetic model applied to gas-phase transport
of heat and tangential momentum between parallel walls, Phys. Fluids, {\bf 23} (2011) 030601
\bibitem{GL} Glassey, R.: The Cauchy Problmes in Kinetic Theory. SIAM 1996.
\bibitem{Gome}  Ghomeshi, S. :  Existence and uniqueness of solutions for the Couette problem. J. Stat. Phys. {\bf118} (2005), no. 1-2, 265–300.
%\bibitem{Guo-whole} Guo, Y.: The Boltzmann equation in the whole space. Indiana Univ. Math. J. {\bf 53} (2004). no.4, 1081-1094
%\bibitem{Guo-VMB} Guo, Y.: The Vlasov-Maxwell-Boltzmann system near Maxwellians. Invent. Math. {\bf 153} (2003) no.3, 593-630
%\bibitem{Guo-VPB} Guo, Y.: The Vlasov-Poisson-Boltzmann system near Maxwellians. Comm. Pure. Appl. Math., {\bf 55} (2002) no.9, 1104-1135.
%\bibitem{Guo-CD} Guo, Y.: Continuity and decay of the Boltzmann equation in bounded domains. Arch. Ration. Mech. Anal. {\bf 197} (2010) no.3, 713-809 (2010)
%\bibitem{Guo-Strain} Guo, Y. Strain, R.: Almost exponential decay near Maxwellian, Comm. P.D.E. {\bf 31} (2006)no.2,  543-567.
%\bibitem{Guo-Wang} Guo, Y., Wang, Y.: Decay of dissipative equations and negative Sobolev spaces. Comm. Partial Differential Equations {\bf 37} (2012), no. 12, 2165–2208.
\bibitem{Holway} Holway, L.H.: Kinetic theory of shock structure using and ellipsoidal distribution function. Rarefied Gas Dynamics, Vol. I
(Proc. Fourth Internat. Sympos., Univ. Toronto, 1964), Academic Press, New York, (1966), pp. 193-215.
%\bibitem{Issau} Issautier, D.: Convergence of a weighted particle method for solving the Boltzmann (B.G,K.) equaiton, Siam Journal on Numerical Analysis,{\bf 33}, no 6 (1996), 2099-2199.
%\bibitem{Kawashima} Kawashima, S. : The Boltzmann equation and thirteen moments, Japan J. Appl. Math. {bf 7} (1990), 301-320.
%\bibitem{Maslova} Maslova, N.B., Tchubenko, R.P.: Dokl. Akad. Nauk. SSSR {\bf 202} (1973), 800-803. Vesin. Leningr. Univ. {\bf 1} (1973), 100-105.
%{\bf 7}, 109-113 (1976). Vestn. Leningr. Univ. {\bf 13}, 90-97 (1976)
%\bibitem{Mouhot} Mouhot, C.: Rate of convergence to equilibrium for the spatially homogeneous Boltzmann equation with hard potentials.
%Comm. Math. Phys. {\bf 261} no.3 629-672 (2006)
%\bibitem{L-Y-Y} Liu, T.-P, Yang, T. Yu, S.-H.: Energy method for Boltzmann equation. Phys. D {\bf 188} (2004), no. 3-4, 178-192.
%\bibitem{L-Y1} Liu, T.-P.; Yu, S.-H. : The Green's function and large-time behavior of solutions for the one-dimensional Boltzmann equation.Comm. Pure Appl. Math. {\bf57} (2004), no. 12, 1543–1608.
%\bibitem{L-Y2} Liu, T.-P.; Yu, S.-H. : Solving Boltzmann equation, Part I: Green's function. Bull. Inst. Math. Acad. Sin. (N.S.) {\bf6} (2011), no. 2, 115–243.
%\bibitem{M-M-M} Mellet, A., Mischler, S., Mouhot, C.: Fractional diffusion limit for collisional kinetic equations. Arch. Ration. Mech. Anal. 199 (2011), no. 2, 493–525.
%\bibitem{Mellet} Mellet, A.: Fractional diffusion limit for collisional kinetic equations: a moments method. Indiana Univ. Math. J. 59 (2010), no. 4, 1333–1360.
\bibitem{M-S} Mieussens, L., Struchtrup, H.: Numerical comparison of Bhatnagar-Gross-Krook models with proper Prandtl number, Phys. Fluids {\bf 16}(2004), no.8. 2797-2813{\bf 10} (2000), no. 8, 1121-1149.
\bibitem{M-W-R}Meng, J., Wu, L., Reese, J. M., Zhang, Y.: Assessment of the ellipsoidal-statistical Bhatnagar-Gross-Krook model for force-driven Poiseuille flows. J. Comput. Phys. 251 (2013), 383–395.
%\bibitem{Mischler} Mischler, S.: Uniqueness for the BGK-equation in $R^n$ and the rate of convergence for a semi-discrete scheme.
%Differential integral Equations {\bf 9} (1996), no.5, 1119-1138.
%\bibitem{Perthame} Perthame, B. : Global existence to the BGK model of Boltzmann equation. J. Differential Equations. {\bf 82} (1989), no.1, 191-205.
%\bibitem{P-Puppo} Pieraccini, S., Puppo, G.: Implicit-explicit schemes for BGK kinetic equations. J. Sci. Comput. {\bf 32} (2007), no.1, 1-28.
\bibitem{P-P} Perthame, B., Pulvirenti, M.: Weighted $L^{\infty}$ bounds and uniqueness for the Boltzmann BGK model. Arch. Rational Mech. Anal. {\bf 125}
(1993), no. 3, 289-295.
%\bibitem{R-S-Y} Russo, G., Santagati, P. and Yun, S.-B. : Convergence of a semi-Lagrangian scheme for the BGK model of the Boltzmann equation. arXiv: 1007.2843v1 [math.AP].
%\bibitem{SR1} Saint-Raymond, L.: From the BGK model to the Navier-Stokes equations. Ann. Sci. Ecole Norm. Sup {\bf 36} (2003), no.2, 271-317.
%\bibitem{SR1} Saint-Raymond, L.: Discrete time Navier-Stokes limit for the BGK Boltzmann equation. Comm. Partial Differential Equations {\bf 27} (2002), no. 1-2,149-184.
%\bibitem{SR2} Saint-Raymond, L.: From the BGK model to the Navier-Stokes equations. Ann. Sci. Ecole Norm. Sup {\bf 36} (2003), no.2, 271-317.
\bibitem{Sone} Sone, Y.: Kinetic Theory and Fluid Mechanics. Boston: Birkh\"{a}user, 2002.
\bibitem{Sone2} Sone, Y.: Molecular Gas Dynamics: Theory, Techniques, and Applications. Boston: Brikh\"{a}user, 2006.
%\bibitem{Strain}  Strain, R. M.: The Vlasov-Maxwell-Boltzmann system in the whole space. Comm. Math. Phys. {\bf268} (2006), no. 2,543–567.
\bibitem{Stru} Struchtrup, H.: The BGK-model with velocity-dependent collision frequency. Contin. Mech. Thermodyn. {\bf 9} (1997), no.1 , 23-31.
\bibitem{Stru-book} Struchtrup, H.: Mesoscopic transport equations for rarefied gas flows: Approximation methods in kinetic theory. Springer. 2005.
%\bibitem{Ukai} Ukai, S.: On the existence of global solutions of a mixed problem for the nonlinear Boltzmann equation. Proc. Japan Acad., Ser. A
{\bf 53}, 179-184 (1974)
%\bibitem{Ukai} Ukai, S.: On the existence of global solutions of mixed problem for non-linear Boltzmann equation. Proc. Japan Acad. {\bf50} (1974), 179–184.
\bibitem{Ukai-BGK} Ukai, S.: Stationary solutions of the BGK model equation on a finite interval with large boundary data. Transport theory Statist. Phys.{\bf 21} (1992) no.4-6.
\bibitem{U-T} Ukai, S. Yang, T.: Mathematical Theory of Boltzmann equation, Lecture Notes Series. no. 8, Liu Bie Ju Center for Math. Sci,City University of Hong Kong, 2006.
%\bibitem{U-T1} Ukai, Seiji; Yang, Tong The Boltzmann equation in the space L2∩L∞β: global and time-periodic solutions. Anal. Appl. %(Singap.) 4 (2006), no. 3, 263–310.
\bibitem{V} Villani, C.: A Review of mathematical topics in collisional kinetic theory. Handbook of mathematical fluid dynamics. Vol. I. North-Holland. Amsterdam, 2002, 71-305
\bibitem{Wal} Walender, P.: On the temperature jump in a rarefied gas, Ark, Fys. {\bf 7}  (1954), 507-553.
%\bibitem{Yang Yu1} Yang, T., Yu, H.-J.: Hypocoercivity of the relativistic Boltzmann and Landau equations in the whole
%space. J. Diff. Eqs. {\bf248} (6), 1518–1560 (2010)
%\bibitem{Yang Yu2} Yang, T., Yu, H.-J.: Optimal Convergence Rates of Classical Solutions for Vlasov-Poisson-Boltzmann System,
%Comm. Math. Phys. {\bf 301} 319-355 (2011)
%\bibitem{Yun1} Yun, S.-B.: Cauchy problem for the Boltzmann-BGK model near a global Maxwellian. J. Math. Phy. {\bf 51} (2010), no. 12, 123514, 24pp.
\bibitem{Yun2} Yun, S.-B.: Classical solutions for the ellipsoidal BGK model with fixed collision frequency.
 J. Differential Equations  259  (2015),  no. 11, 6009–6037
\bibitem{Yun3} Yun, S.-B.: Ellipsoidal BGK Model Near a Global Maxwellian. SIAM J. Math. Anal. {\bf47} (2015), no. 3, 2324–2354.
\bibitem{Yun4} Yun, S.-B.: Entropy production for ellipsoidal BGK model of the Boltzmann equation.  Kinetic and related models. {\bf 9} (2016), no.3, 605-619.
\bibitem{Yun5} Park, S., Yun, S.-B.: Entropy production estimates for the polyatomic ellipsoidal BGK model.
 Appl. Math. Lett. {\bf 58} (2016), 26-33.
%\bibitem{Zhang} Zhang, X.: On the Cauchy problem of the Vlasov-Posson-BGK system: global existence of weak solutions. J. Stat. Phys. {\bf141} (2010),no.3, %566-588.
%\bibitem{Z-H} Zhang, X., Hu, S.: $L^p$ solutions to the Cauchy problem of the BGK equation. J. Math. Phys. {\bf 48} (2007) no.11, 113304, 17pp.
%\bibitem{Z-VPBGK} Zhang, X.: On the Cauchy problem of the Vlasov-Poisson-BGK system: global existence of weak solutions. J. Stat. Phys. 141 (2010), no. 3, 566–588
\bibitem{Z-Stru} Zheng, Y., Struchtrup, H. : Ellipsoidal statistical Bhatnagar-Gross-Krook model with velocity dependent collision frequency.
Phys. Fluids {\bf 17} (2005), 127103, 17pp.
\end{thebibliography}

\end{document}